\documentclass[10pt]{article}
\usepackage{float} 
\usepackage{amsmath, amssymb, amsthm}
\newtheorem{theorem}{Theorem}
\newtheorem{hypothesis}[theorem]{Hypothesis}
\newtheorem{lemma}[theorem]{Lemma}
\newtheorem{Example}[theorem]{Example}
\newtheorem{remark}[theorem]{Remark}
\newtheorem{corollary}[theorem]{Corollary}

\def\R{\mathbb R}
\def\P{\mathbb P}
\def\Q{{\mathbb Q}}

\def\1{{ \mathbbm{1}}}
\def\E{\mathbb{E}}
\def\N{\mathbb{N}}
\newcommand{\di}{\displaystyle}
\usepackage{xcolor}
\usepackage{graphicx}
\usepackage{tikz}
\def\1{{ \mathbbm{1}}}
\usepackage[T1]{fontenc}
\usepackage{amsmath,amssymb,amstext}
\usepackage{mathrsfs}
\usepackage{bbm}

\usepackage[a4paper,margin=1.25in]{geometry}
\usepackage{orcidlink}

\usepackage{xcolor}
\definecolor{darkgreen}{rgb}{0.0,0.65,0.0}
\definecolor{BLUE}{RGB}{0,0,255}
\definecolor{GREEN}{rgb}{0.0,0.55,0.0}
\definecolor{pinegreen}{rgb}{0.0, 0.47, 0.44}
\definecolor{non-photoblue}{rgb}{0.64, 0.87, 0.93}
\definecolor{magicmint}{rgb}{0.67, 0.94, 0.82}
\definecolor{laserlemon}{rgb}{1.0, 1.0, 0.13}
\definecolor{darkolivegreen}{rgb}{0.33, 0.42, 0.18}
\definecolor{darkgoldenrod}{rgb}{0.72, 0.53, 0.04}
\definecolor{cof}{RGB}{219,144,71}
\definecolor{pur}{RGB}{186,146,162}
\definecolor{greeo}{RGB}{91,173,69}
\definecolor{greet}{RGB}{52,111,72}
\definecolor{brightube}{rgb}{0.82, 0.62, 0.91}
\definecolor{darkcyan}{rgb}{0.0, 0.55, 0.55}
\usepackage{natbib}
\usepackage{hyperref}
\hypersetup{
  colorlinks=true,
  linkcolor=darkgreen,     % Table of contents section titles, equations, etc.
  citecolor=darkgreen,       % citations like (Beck et al., 2019)
  urlcolor=blue         % URLs
}

\begin{document}

\title{Nonzero-Sum Stochastic Differential Games for Controlled Convection-Diffusion SPDEs
%Game Controlled stochastic partial differential equations  in Spatially Heterogeneous Systems
}
\author{Nacira Agram\orcidlink{0000-0003-1662-0215}\footnotemark[1]\; %\; Bernt Øksendal\footnotemark[2] \, 
and\, Eya Zougar\orcidlink{0009-0002-7278-5567}\footnotemark[3]}

\footnotetext[1]{Department of Mathematics, KTH Royal Institute of Technology 100 44, Stockholm, Sweden. Digital Futures, Stockholm. Email: nacira@kth.se.}

%\footnotetext[2]{Department of Mathematics, University of Oslo, Oslo, Norway 
%Email: oksendal@math.uio.no}

\footnotetext[2]{Univ. Polytechnique Hauts-de-France, INSA Hauts-de-France, CERAMATHS - Laboratoire de Mat\'eriaux C\'eramiques et de Math\'ematiques, F-59313 Valenciennes, France. 
Email:  Eya.Zougar@uphf.fr}
\date{\today}
\maketitle

\begin{abstract}
This paper studies a two-player nonzero-sum stochastic differential game governed by a controlled convection–diffusion stochastic partial differential equation (SPDE) with spatially heterogeneous coefficients. The diffusion and transport operators depend on the players’ controls, allowing each agent to influence the system dynamics. 
We prove the existence and uniqueness of solutions to both the forward uncontrolled SPDE and the associated adjoint backward SPDE (BSPDE) in a Hilbert space framework. Using a Hamiltonian approach, we derive sufficient and necessary maximum principles characterizing Nash equilibria. Special attention is given to operators with piecewise constant coefficients, where interface transmission conditions arise naturally.
As an illustration, we provide two examples from composite materials where the game structure models the interaction between different material phases in a diffusion process.
\end{abstract}

\textbf{Keywords:} Forward and Backward SPDEs; heterogeneous media; nonzero-sum games; Hamiltonian systems; composite materials; spatially dependent control; Malliavin calculus.

\section{Introduction}

Stochastic partial differential equations (SPDEs) have emerged as powerful tools for modeling systems influenced by both spatial variability and random fluctuations in time. These equations find applications in a wide range of disciplines, including physics \cite{Lejay2004}, engineering, biology \cite{Nicaise1984}, ecology \cite{CantrellCosner1999} and finance \cite{AgramOksendal2021}, where uncertainty and spatial structure are intrinsic to the dynamics. Typical examples include stochastic with discontinuous or piecewise constant coefficients, see for example \cite{ZZ4}, as well as SPDEs involving mixed or fractional operators \cite{Zougar2024,Zougar2026}, arising in the modeling of diffusion and propagation in composite or spatially heterogeneous media.

In many practical situations, the evolution of such systems is not only influenced by random noise but also subject to external control or intervention. This leads naturally to the study of controlled SPDEs, where decision-makers seek to optimize certain performance criteria by influencing the system through time dependent control variables.
An even richer framework arises when multiple agents, possibly with conflicting objectives, interact with the system. This brings us into the realm of stochastic differential games governed by SPDEs. In this setting, each player applies control strategies to influence the evolution of the system, aiming to optimize their own cost functional. Such game-theoretic formulations are particularly relevant in competitive physical systems, economics, and optimal design problems in materials science.

This paper investigates a two-player stochastic differential game in which the state dynamics are governed by a controlled SPDE with heterogeneous coefficients. The control acts through both the diffusion and drift operators, allowing each player to influence the transport and diffusion mechanisms in a distinct manner.

To describe the interaction between the players’ control strategies and the system dynamics, we consider the following controlled Convection-Diffusion SPDE:
\begin{equation}\label{eqcontrol.01}
\left\{
\begin{array}{rcl}
 dY(t,x)&=& \mathcal{A}^{u_1,u_2}_{\rho,a,b}Y(t,x) \, dt + \kappa(t,x,Y(t,x), u_1(t),u_2(t)) \, dt\\ 
 &+& \sigma(t,x,Y(t,x), u_1(t),u_2(t)) \, dB(t); \qquad (t,x) \in [0,T] \times \mathbb{R}, \\
Y(0,x) &=& \xi(x), \quad x \in \mathbb{R},
\end{array}
\right.
\end{equation}
where \( Y(t, x) \) represents the state of the material at time \( t \) and spatial position \( x \), such as temperature or stress.  And, \( B(t) \) is a one-dimensional Brownian motion with zero mean and covariance
\[
\mathbb{E}[B_t B_s] = t \land s.
\]
The functions $\kappa$, $\sigma$, and the initial condition $\xi$ will be defined in Section~2, where the corresponding structural and regularity assumptions are stated.

The operator \( \mathcal{A}^{u_1,u_2}_{\rho,a,b} \) in the equation (\ref{eqcontrol.01})  is defined as

\begin{equation}\label{operatorcontrol0}
\mathcal{A}^{u_1,u_2}_{\rho,a,b}(x)= \frac{\rho(x)}{2} \frac{d}{dx} \left(  a(x,u_1) \frac{d}{dx} \right) + b(x,u_2) \frac{d}{dx},
\end{equation}
appears to model a differential operator that describes a system with spatially varying properties, where the terms $u_i,\, i=1,2$ are control functions that influence the behavior of the system. 
In the general case, the coefficients $a(x, u_1)$, $b(x, u_2)$, and $\rho(x)$ are taken as measurable and sufficiently regular functions that may vary continuously or discontinuously  with respect to the spatial variable $x$, at several locations within the domain, and depend on control variables $u_1$ and $u_2$. Specifically, $a(x, u_1)$ models the local diffusion properties influenced by the control $u_1$, $b(x, u_2)$ represents a controlled transport or drift effect, and $\rho(x)$ reflects spatially varying material density or mass distribution. 
This formulation is flexible enough to incorporate a wide range of realistic phenomena, including materials with non-uniform structures, spatial asymmetries, and regions where the physical properties shift due to external inputs or design strategies.

In the setting of a two-player control game, the control functions $u_i(t),$ for $i=1,2$, represent the actions of each player and may correspond to external factors such as:
\begin{itemize} 
\item Uniform temperature settings applied during curing or heating processes.
\item Global mechanical loads exerted on the material.
\item  Homogeneous processing parameters like pressure or chemical treatments.
 \item Other relevant influences depending on the specific application.
\end{itemize}
Each player aims to optimize their own cost functional, which balances deviations from desired system metrics (e.g., temperature distribution or stress profile) and the cost of applying their control $u_i(t), \,i=1,2$. By strategically selecting their controls, players influence the system evolution cooperatively or competitively, shaping the material’s final properties under uncertainty.

A particular case of the SPDE considered in this work is when the coefficients are piecewise constant, reflecting abrupt changes in material properties across different spatial regions. Indeed,
the functions \( a(x, u_1) \), $b(x,u_2)$  and \( \rho(x) \)  are piecewise functions representing the properties of the composite material. Specifically:
\begin{equation}\label{coffab}
a(x, u_1) = a^-(u_1)  \mathbbm{1}_{\{ x \leq 0 \}} + a^+(u_1)  \mathbbm{1}_{\{ x > 0 \}}, \quad 
b(x, u_2) = b^-(u_2)  \mathbbm{1}_{\{ x \leq 0 \}} + b^+(u_2)  \mathbbm{1}_{\{ x > 0 \}}
\end{equation}
\begin{equation}\label{coffrho}
    \text{and}\quad  \rho(x) = \rho^-  \mathbbm{1}_{\{ x \leq 0 \}} + \rho^+  \mathbbm{1}_{\{ x > 0 \}}.
\end{equation}
Here, $a^{\pm}, b^{\pm} \in C^1(\mathbb{R})$ for $i = 1, 2$, and $\rho^{\pm}$ are real constants. The spatial dependence of $a(x, u_1)$ reflects the heterogeneous nature of composite materials, while the control input $u_1(t)$, which does not vary with $x$, models uniform external influences such as temperature settings or mechanical loading applied across the entire material. In contrast, the spatial variation in $b(x, u_2)$ captures asymmetric transport phenomena, allowing the modeling of external influences that vary across the domain. This enables the description of localized control actions that impact the direction and magnitude of transport or stress differently in distinct regions of the medium. And, the coefficient $\rho$ plays the role of a spatial weight or density factor in the diffusion operator. \\

\begin{figure}[ht!]
\begin{center}
    \begin{tikzpicture}[thick,scale=2.5]
    \coordinate (A1) at (0, 0);
    \coordinate (A2) at (0, 1);
    \coordinate (A3) at (1, 1);
    \coordinate (A4) at (1, 0);
    \coordinate (B1) at (0.3, 0.3);
    \coordinate (B2) at (0.3, 1.3);
    \coordinate (B3) at (1.3, 1.3);
    \coordinate (B4) at (1.3, 0.3);
\draw[very thick] (A1) -- (A2);
    \draw[very thick] (A2) -- (A3);
    \draw[very thick] (A3) -- (A4);
    \draw[very thick] (A4) -- (A1);
 \draw[dashed] (A1) -- (B1);
    \draw[dashed] (B1) -- (B2);
    \draw[very thick] (A2) -- (B2);
    \draw[very thick] (B2) -- (B3);
    \draw[very thick] (A3) -- (B3);
    \draw[very thick] (A4) -- (B4);
    \draw[very thick] (B4) -- (B3);
    \draw[dashed] (B1) -- (B4);
\draw[fill=greeo,opacity=0.6] (A1) -- (B1) -- (B4) -- (A4);
    \draw[fill=greeo,opacity=0.5] (A1) -- (A2) -- (A3) -- (A4);
    \draw[fill=greeo,opacity=0.6] (A1) -- (A2) -- (B2) -- (B1);
    \draw[fill=greeo,opacity=0.6] (B1) -- (B2) -- (B3) -- (B4);
    \draw[fill=greeo,opacity=0.6] (A3) -- (B3) -- (B4) -- (A4);
    \draw[fill=greeo,opacity=0.6] (A2) -- (B2) -- (B3) -- (A3);

    \coordinate (A1) at (1, 0);
    \coordinate (A2) at (1, 1);
    \coordinate (A3) at (2, 1);
    \coordinate (A4) at (2, 0);
    \coordinate (B1) at (1.3, 0.3);
    \coordinate (B2) at (1.3, 1.3);
    \coordinate (B3) at (2.3, 1.3);
    \coordinate (B4) at (2.3, 0.3);

    \draw[very thick] (A1) -- (A2);
    \draw[very thick] (A2) -- (A3);
    \draw[very thick] (A3) -- (A4);
    \draw[very thick] (A4) -- (A1);

    \draw[dashed] (A1) -- (B1);
    \draw[dashed] (B1) -- (B2);
    \draw[very thick] (A2) -- (B2);
    \draw[very thick] (B2) -- (B3);
    \draw[very thick] (A3) -- (B3);
    \draw[very thick] (A4) -- (B4);
    \draw[very thick] (B4) -- (B3);
    \draw[dashed] (B1) -- (B4);

    \draw[fill=pur,opacity=0.6] (A1) -- (B1) -- (B4) -- (A4);
    \draw[fill=pur,opacity=0.5] (A1) -- (A2) -- (A3) -- (A4);
    \draw[fill=black!70,opacity=0.6] (A1) -- (A2) -- (B2) -- (B1);
    \draw[fill=pur,opacity=0.6] (B1) -- (B2) -- (B3) -- (B4);
    \draw[fill=pur,opacity=0.6] (A3) -- (B3) -- (B4) -- (A4);
    \draw[fill=pur,opacity=0.6] (A2) -- (B2) -- (B3) -- (A3);

\draw[ultra thick,->] (-0.25,-0.25) -- ++(2.7,0) node[right] {$x:$  Spatial Domain };
\draw[ thick] (1, -0.16) node[below] {\color{cyan}$\bullet$} ;
\draw[red] (1.2, -0.1) node[left] {\color{cyan} $x=0$};
\draw[red] (0.3, -0.1) node[left] {\color{cyan} Region 1};
\draw[red] (2.3, -0.1) node[left] {\color{cyan} Region 2};
\draw[ thick] (0.65,0.8) node[below] {\bf Phase  1};
\draw[ thick] (0.65,0.65) node[below] { $\rho^-$};

%Controls
\draw[cyan,ultra thick,<-] (1,1.3)-- (1.3,1.5) node[align=center] {\color{cyan} Controls Game $(u_1(t),u_2(t))$\\};
\draw[cyan,ultra thick,<-] (1.8,1.3)-- (1.3,1.5);

%\draw[ thick] (0.65,0.5) node[below] { ${a^-(u_1)}$}; 
\draw[ thick] (-0.6,0.8) node[below] { ${ \color{greeo}a(x,u_1)=a^-(u_1)}$};
\draw[ thick] (-0.6,0.5) node[below] { ${ \color{greeo}b(x,u_2)=b^-(u_2)}$};
\draw[ thick] (1.65,0.8) node[below] {\bf Phase  2};
\draw[ thick] (1.65,0.65) node[below] { $\rho^+$};
\draw[ thick] (2.9,0.8) node[below] { ${ \color{purple}a(x,u_1)=a^+(u_1)}$};
\draw[ thick] (2.9,0.5) node[below] { ${ \color{purple}b(x,u_2)=b^+(u_2)}$};

\draw[gray,ultra thick,<-] (2.2,1.1)-- (2.7,1.2) node[right] {Desired metric  };
\draw[gray] (2.7,1) node[right] { (e.g,  Heat, stress, etc ) };
%\draw[ultra thick, ->] (0,2) ++ (-50:.55) arc (-50:250:.55 and .25);
\end{tikzpicture}
\caption{Illustration of a two-player control game in a composite system with piecewise constant coefficients}
\end{center}
\end{figure}
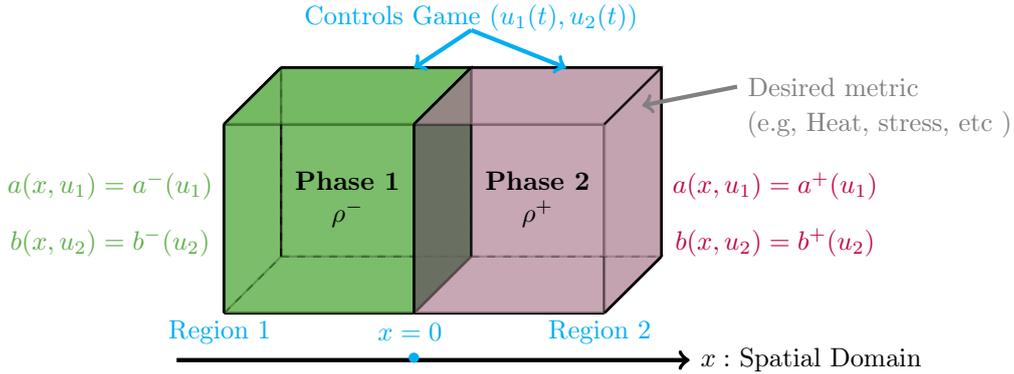

A particular case of the controlled SPDE model, where the drift term $b \equiv 0$ and $a,\rho$ are defined respectively in (\ref{coffab}) and (\ref{coffrho}), was introduced in \cite{ATZ}. This simplification focuses on pure diffusion dynamics without convection or transport effects, allowing for the analysis of systems dominated by stochastic diffusion phenomena.

The aim of this paper is to investigate a two-player nonzero-sum stochastic differential game in which each player seeks to enhance the performance of a spatially heterogeneous material. Specifically, each agent minimizes an individual cost functional that penalizes deviations from prescribed target states (such as a desired temperature profile) together with the cost of the applied control. Because the state dynamics depend on both control actions, the problem gives rise to a strategic interaction between the players through a common stochastic evolution.

Our analysis is based on a Hamiltonian framework. We derive the associated backward stochastic partial differential equations (BSPDEs) and characterize optimal strategies in terms of maximum principles. To illustrate the theoretical results, we provide an application motivated by composite material design, where the two players represent distinct phases or mechanisms interacting to influence the effective properties of the medium.

The paper is organized as follows. In Section 2, we introduce the functional framework and state the main assumptions on the coefficients, and we establish existence and uniqueness results for the uncontrolled forward SPDE. Section 3 is devoted to the formulation of the two-player nonzero-sum stochastic differential game, where we derive the associated Hamiltonian system and the adjoint backward SPDE. We then establish sufficient and necessary maximum principles characterizing Nash equilibria. Then, we present explicit representations in the constant coefficient case and discuss the impact of piecewise constant heterogeneous coefficients, highlighting the corresponding transmission conditions. Finally, in Section 4, we illustrate the theoretical results with some applications to heat regulation in composite materials.

\section{Stochastic Convection--Diffusion SPDE}\label{Sec2}
Let $T>0$ and let $(\Omega,\mathcal{F},\mathbb{P})$ be a probability space with a filtration 
$\mathbb{F}=(\mathcal{F}_t)_{t\in[0,T]}$ satisfying the usual conditions. 
Let $B=(B(t))_{t\in[0,T]}$ be a one-dimensional $\mathbb{F}$-Brownian motion. 
\subsection{Problem Formulation}

Consider the following stochastic partial differential equations (SPDEs) used to model composite materials with spatially varying characteristics. Throughout this work, the model will be referred to as the
\emph{general stochastic convection--diffusion equation} and is given by 
\begin{equation}\label{eq.1}
\left\{
\begin{array}{rcl}
 dY(t,x)&=& \mathcal{A}_{\rho,a,b}Y(t,x) \, dt + \kappa(t,x,Y(t,x)) \, dt\\ 
 &+& \sigma(t,x,Y(t,x)) \, dB(t); \quad (t,x) \in [0,T] \times \mathbb{R}, \\
Y(0,x) &=& \xi(x), \quad x \in \mathbb{R},
\end{array}
\right.
\end{equation}
where \( Y(t, x) \) denotes the state variable of the material at time $t$ and spatial position $x$, representing physical quantities such as temperature, stress, or concentration, depending on the context of the application.
The operator \( \mathcal{A}_{\rho,a,b}\) is a linear second-order parabolic operator in $L^2(\R)$, defined by:
\begin{equation}\label{operator}
\mathcal{A}_{\rho,a,b}= \frac{\rho(x)}{2} \frac{d}{dx} \left(  a(x) \frac{d}{dx} \right) + b(x) \frac{d}{dx}.
\end{equation}
with $a,\rho$ and $b$ possibly discontinuous. The coefficients $\rho,a,b$, corresponds to what is commonly referred to as a \emph{constant skew diffusion}. 
Such coefficients  may be discontinuous at different
points and/or may have several points of discontinuities. \medskip

The operator $\mathcal A_{\rho,a,b}$ considered in this work provides a
general framework that extends several diffusion models previously studied
in the literature. In particular, operators with piecewise constant
coefficients and interface effects were introduced and analyzed in the
context of skew diffusions and composite media in \cite{lejay1,lejay2,ATZ}.
Related stochastic and control-oriented models with discontinuous spatial
coefficients were further investigated in our earlier works, for example \cite{ZZ1,ZZ2,ZZ5,ZZ4} and \cite{TudorZougar2026}. The present formulation allows for
a more general class of spatially heterogeneous coefficients, possibly with
multiple discontinuities, and is therefore well suited for the stochastic
control and game-theoretic analysis developed in the next sections.
 
 From a physical viewpoint, the operator $\mathcal{A}_{\rho,a,b}$ models a {\emph{diffusion-advection process}}, describing how a quantity, such as heat, particle concentration, or population density, spreads within a medium while also being transported by an external flow. The first term represents a diffusion effect, where 
$a(x)$ acts as a spatially dependent diffusivity coefficient, modulated by the density function 
$\rho(x)$. The second term,  represents advection, where 
$b(x)$ functions as a velocity field that moves the quantity in a particular direction. Such an operator appears in various applications, including heat conduction in moving fluids, wave propagation in heterogeneous media, quantum transport in semiconductors, and ecological models describing species migration. Its structure makes it particularly relevant for studying systems where both diffusive spreading and directed transport are present. 
\paragraph{Hypothesis.}Throughout the paper, we work under the following standing assumptions on the
spatial coefficients $(\rho,a,b)$ and on the nonlinear functions
$\kappa$, $\sigma$, and $\xi$ involved in the model.

\begin{hypothesis}[Coefficients $(\rho,a,b)$ of the operator]\label{hypo1}
Let $l_1<l_2$ be extended real numbers in $\overline{\mathbb{R}}$, and let
$\rho,a,b:[l_1,l_2]\to\mathbb{R}$ be measurable functions.
We assume that there exist some positive constants $0<\lambda\le\Lambda$ such that for any $x\in[l_1,l_2]$
\begin{align*}
&\lambda \le \rho(x) \le \Lambda, \\
\lambda \le a(x) \le &\Lambda,\quad \text{and}\quad 
|b(x)| \le \Lambda.
\end{align*}
%$\text{for a.e. } x\in[l_1,l_2]$
\end{hypothesis}

\begin{hypothesis}[Assumptions on $(\kappa,\sigma)$]\label{hypo2}
The coefficients 
\[
\kappa,\sigma : \Omega \times [0,T] \times \mathbb{R} \times \mathbb{R} \to \mathbb{R}
\]
are measurable and Lipschitz continuous with respect to the state variable.
More precisely, there exists a constant $L>0$ such that, for all
$(\omega,t,x)\in \Omega \times [0,T] \times \mathbb{R}$ and all
$y,y'\in\mathbb{R}$,
\[
|\kappa(t,x,y)-\kappa(t,x,y')|
\vee
|\sigma(t,x,y)-\sigma(t,x,y')|
\le L |y-y'|,
\]
and
\[
|\kappa(t,x,y)| \vee |\sigma(t,x,y)|
\le L (1+|y|).
\]
\end{hypothesis}

\begin{hypothesis}\label{hypo3}
 The function $\xi:\R\mapsto\R$ is a nonrandom, measurable, and bounded function. That is, there exists a positive constant $M,$ such that $\left|\xi(x)\right|\leq M,\, \forall x\in\R.$
\end{hypothesis}

\paragraph{Notation.}Throughout the paper, we work on the real line $\mathbb{R}$.
We  introduce the Hilbert space
\begin{equation}\label{spaceH}
\mathbb{H} := L^2(\mathbb R,\rho(x)^{-1}\,dx).
\end{equation}

For $\varphi,\psi \in C_c^\infty(\mathbb{R})$, we define the weighted inner product on $\mathbb{H}$ by
\[
\langle \varphi,\psi\rangle_{\mathbb{H}}
=
\int_{\mathbb{R}} \varphi(x)\,\psi(x)\,\frac{dx}{\rho(x)},
\]
which extends by density to $\mathbb{H}$, and induces the norm
\[
\|\psi\|_{\mathbb{H}}
=
\left( \int_{\mathbb{R}} |\psi(x)|^2 \frac{dx}{\rho(x)} \right)^{1/2}.
\]

\subsection{Some properties of the operator %${\cal A}_{\rho,a,b}$
} 

We now briefly recall some analytical properties of the operator $\mathcal{A}_{\rho,a,b}$ that will be useful in the sequel, distinguishing between the smooth-coefficient case and the heterogeneous case with
piecewise constant coefficients.

\begin{lemma}[Linearity]
\label{lem:linearity}
Let $\rho,a,b:\mathbb R\to\mathbb R$ be measurable functions satisfying
Hypothesis~\ref{hypo1}. Then, for any $\varphi_i\in C_c^\infty(\mathbb R)$,
$i=1,2$, and any $\alpha_i\in\mathbb R$, $i=1,2$, the operator
$\mathcal A_{\rho,a,b}$ satisfies
\[
\mathcal{A}_{\rho,a,b}\left(\sum_{i=1}^2\alpha_i\varphi_i\right)
=\sum_{i=1}^2\alpha_i\,\mathcal{A}_{\rho,a,b}(\varphi_i)
\]
\end{lemma}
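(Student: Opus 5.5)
The argument is by direct verification, keeping in mind that $a$, $\rho$, $b$ may be discontinuous. The first step is to fix how $\mathcal A_{\rho,a,b}\varphi$ is understood for $\varphi\in C_c^\infty(\mathbb R)$. The term $b\varphi'$ is a bounded measurable function with compact support. The term $\frac{\rho}{2}(a\varphi')'$ must be read in the distributional (weak) sense: $a\varphi'\in L^\infty$ has compact support, so $(a\varphi')'$ is a distribution, and multiplying by $\rho$ is understood through the weighted pairing of $\mathbb H$. Equivalently, for every test function $\psi\in C_c^\infty(\mathbb R)$ we use the bilinear-form identity
\[
\langle \mathcal A_{\rho,a,b}\varphi,\psi\rangle_{\mathbb H}
= -\frac12\int_{\mathbb R} a(x)\varphi'(x)\psi'(x)\,dx+\int_{\mathbb R} b(x)\varphi'(x)\psi(x)\,\frac{dx}{\rho(x)}.
\]
Under Hypothesis~\ref{hypo1} both integrals are finite, since $a$, $b$, $\rho^{-1}$ are bounded and the integrands have compact support.

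The second step is to check linearity at each stage in which the operator is built. Differentiation is linear: $(\alpha_1\varphi_1+\alpha_2\varphi_2)'=\alpha_1\varphi_1'+\alpha_2\varphi_2'$. Pointwise multiplication by the fixed measurable functions $a$ and $b$ is linear, as is distributional differentiation of $a\varphi'$. Multiplication by $\rho/2$ is also linear. In the weak formulation this means the right-hand side above is linear in $\varphi$ for each fixed $\psi$, by linearity of the Lebesgue integral, which applies because every term is integrable. It follows that
\[
\Big\langle \mathcal A_{\rho,a,b}\Big(\sum_{i=1}^2\alpha_i\varphi_i\Big)-\sum_{i=1}^2\alpha_i\mathcal A_{\rho,a,b}\varphi_i,\ \psi\Big\rangle_{\mathbb H}=0 \quad \text{for all } \psi\in C_c^\infty(\mathbb R).
\]

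The last step is to conclude equality as distributions. Since $C_c^\infty(\mathbb R)$ is dense in $\mathbb H$ and $\rho^{-1}$ is bounded above and below, the functional vanishes and therefore so does the difference. When the coefficients are regular enough that $\mathcal A_{\rho,a,b}\varphi\in\mathbb H$, the difference is the zero element of $\mathbb H$. The only delicate point is this choice of interpretation when $a$ is discontinuous, because the classical pointwise formula then fails at the jump points. Once the weak form is fixed, linearity is immediate, and the constants $\lambda,\Lambda$ enter only to make the integrals well defined.
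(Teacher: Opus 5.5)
Your proposal is correct and follows essentially the paper's argument, namely linearity of differentiation and of multiplication by the fixed coefficients $\rho,a,b$. It adds the care of fixing the weak (bilinear-form) meaning of $\frac{\rho}{2}(a\varphi')'$ when $a$ is discontinuous, which the paper's pointwise computation leaves implicit; the density appeal in your last step is unnecessary, since a distribution (or element of $V^*$) that vanishes on every test function is already zero.
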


\begin{proof}
The result follows from the linearity of differentiation and multiplication by
fixed functions. Indeed, for $\varphi_1,\varphi_2\in C_c^\infty(\mathbb R)$ and
$\alpha_1,\alpha_2\in\mathbb R$, we have
\[
\begin{aligned}
\mathcal A_{\rho,a,b}\!\left(\alpha_1\varphi_1+\alpha_2\varphi_2\right)
&=
\frac{\rho(x)}{2}\frac{d}{dx}\!\left(
a(x)\frac{d}{dx}(\alpha_1\varphi_1+\alpha_2\varphi_2)
\right)
+
b(x)\frac{d}{dx}(\alpha_1\varphi_1+\alpha_2\varphi_2)
\\
&=
\alpha_1\left[
\frac{\rho(x)}{2}\frac{d}{dx}\!\left(a(x)\frac{d\varphi_1}{dx}\right)
+
b(x)\frac{d\varphi_1}{dx}
\right]
+
\alpha_2\left[
\frac{\rho(x)}{2}\frac{d}{dx}\!\left(a(x)\frac{d\varphi_2}{dx}\right)
+
b(x)\frac{d\varphi_2}{dx}
\right],
\end{aligned}
\]
which yields the desired identity.
\end{proof}

\begin{lemma}[Coercivity in $\mathbb{H}$]
\label{lem:coercivity}

Assume that the coefficients $(\rho,a,b)$ satisfy Hypothesis~\ref{hypo1}.
Let
\begin{equation}\label{space def}
V := H^1(\mathbb R),    
\end{equation}
endowed with the norm
\[
\|u\|_V^2 := \|u'\|_{L^2(\mathbb R)}^2 + \|u\|_{\mathbb{H}}^2.
\]
Then the operator $-\mathcal A_{\rho,a,b}:V\to V^*$ satisfies the coercivity
condition that is, there exist some positive constants $\alpha>0$ and
$\lambda_0\ge0$, depending only on the bounds in Hypothesis~\ref{hypo1}, such that
\[
2\langle -\mathcal A_{\rho,a,b}u,u\rangle_{V^*,V}
+ \lambda_0 \|u\|_{\mathbb{H}}^2
\;\ge\;
\alpha \|u\|_V^2,
\qquad \forall u\in V.
\]
\end{lemma}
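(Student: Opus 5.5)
The plan is to compute the duality pairing $\langle -\mathcal A_{\rho,a,b}u,u\rangle_{V^*,V}$ in the weighted space $\mathbb H$, where the weight $\rho^{-1}$ is chosen precisely to cancel the factor $\rho$ in front of the divergence term. For $u\in C_c^\infty(\mathbb R)$ (and then by density for $u\in V$), the operator acts as $\mathcal A_{\rho,a,b}u=\frac{\rho}{2}(au')'+bu'$, understood in the distributional sense since $a$ may be discontinuous. The pairing is therefore defined through the bilinear form
\[
\langle -\mathcal A_{\rho,a,b}u,v\rangle_{V^*,V}
:=\frac12\int_{\mathbb R} a(x)\,u'(x)\,v'(x)\,dx-\int_{\mathbb R}\frac{b(x)}{\rho(x)}\,u'(x)\,v(x)\,dx .
\]
This is obtained formally by multiplying by $v/\rho$ and integrating by parts; the $\rho$ cancels in the diffusion part. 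The form is bounded on $V\times V$ by Hypothesis~\ref{hypo1}, since $a\le\Lambda$ and $|b|/\rho\le\Lambda/\lambda$. So $-\mathcal A_{\rho,a,b}$ defines an element of $\mathcal L(V,V^*)$, and linearity (Lemma~\ref{lem:linearity}) extends from test functions to $V$ by density.

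Next I take $v=u$. The diffusion part gives
\[
\int a|u'|^2\,dx\ \ge\ \lambda\|u'\|_{L^2}^2 .
\]
For the convection part I use Cauchy--Schwarz and Young's inequality with a parameter $\varepsilon>0$. Writing $|b/\rho|\le \Lambda\rho^{-1/2}\lambda^{-1/2}$, I get
\[
2\Bigl|\int \frac{b}{\rho}u'u\,dx\Bigr|\ \le\ \frac{2\Lambda}{\sqrt\lambda}\,\|u'\|_{L^2}\,\|u\|_{\mathbb H}\ \le\ \varepsilon\|u'\|_{L^2}^2+\frac{\Lambda^2}{\varepsilon\lambda}\|u\|_{\mathbb H}^2 .
\]
Choosing $\varepsilon=\lambda/2$ yields
\[
2\langle -\mathcal A_{\rho,a,b}u,u\rangle\ \ge\ \frac{\lambda}{2}\|u'\|_{L^2}^2-\frac{2\Lambda^2}{\lambda^2}\|u\|_{\mathbb H}^2 .
\]
Set $\alpha=\lambda/2$ and $\lambda_0=\frac{2\Lambda^2}{\lambda^2}+\frac{\lambda}{2}$. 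Then adding $\lambda_0\|u\|_{\mathbb H}^2$ gives
\[
2\langle -\mathcal A_{\rho,a,b}u,u\rangle+\lambda_0\|u\|_{\mathbb H}^2\ \ge\ \frac{\lambda}{2}\bigl(\|u'\|_{L^2}^2+\|u\|_{\mathbb H}^2\bigr)=\alpha\|u\|_V^2 .
\]
Both constants depend only on $\lambda$ and $\Lambda$. I will also note that $\|u\|_V$ is equivalent to the standard $H^1$ norm, because $\Lambda^{-1}\le\rho^{-1}\le\lambda^{-1}$; so $V=H^1(\mathbb R)$ with this norm is a Hilbert space densely and continuously embedded in $\mathbb H$.

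The main obstacle is conceptual rather than computational: making precise the meaning of $\mathcal A_{\rho,a,b}$ on $V$ when $a$ and $\rho$ are merely measurable. In that case $(au')'$ is only a distribution, and $\rho$ times a distribution is not defined in general. I will handle this by \emph{defining} $-\mathcal A_{\rho,a,b}:V\to V^*$ via the bilinear form above, with $V^*$ taken relative to the pivot space $\mathbb H$. I will then remark that this agrees with the classical expression $\langle -\mathcal A u,u\rangle_{\mathbb H}$ whenever the coefficients are smooth, so the lemma is consistent with the formula~\eqref{operator}. For piecewise constant $a$ this weak formulation automatically encodes the flux transmission condition $a^-u'(0^-)=a^+u'(0^+)$, which I will mention as a consistency check.
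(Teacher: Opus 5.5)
Your proof is correct, and the estimate uses the same mechanism as the paper's. Both arguments take a lower bound on the diffusion coefficient, absorb the drift into half of the gradient term by Cauchy--Schwarz and Young, and compensate with $\lambda_0$.

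The genuine difference is the bilinear form you start from. The paper defines $\langle \mathcal A_{\rho,a,b}u,v\rangle_{V^*,V} := -\frac12\int \rho a\,u'v'\,dx+\int b\,u'v\,dx$. It uses $\rho a\ge\lambda^2$ and $|b|\le\Lambda$ in unweighted $L^2$, then passes to $\|u\|_{\mathbb H}$ by norm equivalence, ending with $\alpha=\min\{\lambda^2/2,1\}$.

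You instead use $\frac12\int a\,u'v'\,dx-\int (b/\rho)\,u'v\,dx$, which is the form actually induced by pairing $\mathcal A_{\rho,a,b}u$ with $v$ in $\mathbb H$, since the weight cancels the prefactor $\rho$. So you use $a\ge\lambda$ directly. Your splitting $\rho^{-1}\le\lambda^{-1/2}\rho^{-1/2}$ puts the drift bound straight into $\|u\|_{\mathbb H}$, giving explicit constants $\alpha=\lambda/2$ and $\lambda_0=2\Lambda^2/\lambda^2+\lambda/2$.

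The two forms agree only when $\rho$ is constant. Yours is the one consistent with the Gelfand triple $V\subset\mathbb H\simeq\mathbb H^*\subset V^*$ of Theorem~\ref{thm:oks-general}. It also matches the integrations by parts in Lemmas~\ref{lem:adjoint-smooth} and~\ref{lem:adjoint-piecewise}, which produce exactly $-\frac12\int a\,\phi'\varphi'$ and $\int (b/\rho)\,\phi'\varphi$. The paper's form is really the unweighted form of $\frac12(\rho a u')'+bu'$.

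Coercivity holds for either form, so the lemma is unaffected. One caution about your side remark: $a^-u'(0^-)=a^+u'(0^+)$ is the flux condition of your $\mathbb H$-induced form. Theorem~\ref{thm:piecewise-BSPDE} instead states $\kappa^-u_x(t,0^-)=\kappa^+u_x(t,0^+)$ with $\kappa^\pm=\rho^\pm a^\pm/2$, which matches the paper's $\rho a$-form. The two conditions coincide only when $\rho^-=\rho^+$, so your consistency check will not line up with the paper's later statement in general.
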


\begin{proof}
The operator $\mathcal A_{\rho,a,b}$ is understood in the variational sense:
for all $u,v\in V$,
\[
\langle \mathcal A_{\rho,a,b}u,v\rangle_{V^*,V}
:=
-\frac12\int_{\mathbb R}\rho(x)a(x)u'(x)v'(x)\,dx
+\int_{\mathbb R} b(x)u'(x)v(x)\,dx.
\]

By Hypothesis~\ref{hypo1}, the coefficients satisfy
\[
\lambda \le \rho(x)\le \Lambda,
\qquad
\lambda \le a(x)\le \Lambda,
\qquad
|b(x)|\le \Lambda,
\quad \text{a.e. }x\in\mathbb R.
\]
In particular,
\[
\rho(x)a(x)\ge \lambda^2 \quad \text{a.e. }x\in\mathbb R.
\]

Taking $v=u\in V$, we obtain
\[
\langle \mathcal A_{\rho,a,b}u,u\rangle
=
-\frac12\int_{\mathbb R}\rho(x)a(x)|u'(x)|^2\,dx
+\int_{\mathbb R} b(x)u'(x)u(x)\,dx.
\]

The diffusion term is estimated using uniform ellipticity:
\[
-\frac12\int_{\mathbb R}\rho a\,|u'|^2\,dx
\le
-\frac{\lambda^2}{2}\|u'\|_{L^2(\mathbb R)}^2.
\]

For the drift term, by Cauchy--Schwarz and Young’s inequality, for any
$\varepsilon>0$,
\[
\left|\int_{\mathbb R} b\,u'u\,dx\right|
\le
\Lambda\|u'\|_{L^2}\|u\|_{L^2}
\le
\varepsilon\|u'\|_{L^2}^2
+
\frac{\Lambda^2}{4\varepsilon}\|u\|_{L^2}^2.
\]

Since $\rho$ is bounded above and below, the norms
$\|\cdot\|_{L^2(\mathbb R)}$ and $\|\cdot\|_{\mathbb{H}}$ are equivalent. Hence there
exists a constant $C_\rho>0$ such that
\[
\|u\|_{L^2(\mathbb R)}^2 \le C_\rho \|u\|_{\mathbb{H}}^2.
\]

Combining the above estimates yields
\[
\langle \mathcal A_{\rho,a,b}u,u\rangle
\le
-\Big(\frac{\lambda^2}{2}-\varepsilon\Big)\|u'\|_{L^2}^2
+ C_\varepsilon \|u\|_{\mathbb{H}}^2,
\]
for some constant $C_\varepsilon>0$.

Multiplying by $-2$ and choosing $\varepsilon=\lambda^2/4$, we obtain
\[
-2\langle \mathcal A_{\rho,a,b}u,u\rangle
\ge
\frac{\lambda^2}{2}\|u'\|_{L^2}^2
- C \|u\|_{\mathbb{H}}^2,
\]
for some constant $C>0$ depending only on $\lambda$ and $\Lambda$.

Choosing $\lambda_0>C+1$, it follows that
\[
-2\langle \mathcal A_{\rho,a,b}u,u\rangle
+ \lambda_0\|u\|_{\mathbb{H}}^2
\ge
\alpha\big(\|u'\|_{L^2}^2+\|u\|_{\mathbb{H}}^2\big)
=
\alpha\|u\|_V^2,
\]
with $\alpha:=\min\{\lambda^2/2,1\}>0$. This proves the coercivity condition.
\end{proof}

\paragraph{Smooth coefficient case.}
\begin{lemma}
\label{rem:basic-props}
Let $(\rho,a,b)$ satisfy Hypothesis~\ref{hypo1}.
If $a\in C^1(\mathbb{R})$, then for any $\varphi\in C_c^\infty(\mathbb{R})$,
the operator $\mathcal{A}_{\rho,a,b}$ can be written in the following non-divergence form
\[
(\mathcal{A}_{\rho,a,b}\varphi)(x)
=\frac{\rho(x)a(x)}{2}\,\varphi''(x)
+\left(\frac{\rho(x)a'(x)}{2}+b(x)\right)\varphi'(x),
\qquad x\in\R.
\]
\end{lemma}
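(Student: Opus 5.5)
The identity follows from a direct pointwise computation using the product rule. The operator is defined in \eqref{operator} by
\[
\mathcal{A}_{\rho,a,b}\varphi=\frac{\rho(x)}{2}\frac{d}{dx}\bigl(a(x)\varphi'(x)\bigr)+b(x)\varphi'(x).
\]
Since $a\in C^1(\mathbb R)$ and $\varphi\in C_c^\infty(\mathbb R)$, the product $a\varphi'$ lies in $C^1(\mathbb R)$ with compact support. The inner derivative is therefore classical at every $x\in\mathbb R$, and no variational interpretation as in Lemma~\ref{lem:coercivity} is needed.

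\textbf{Step 1.} Apply the Leibniz rule to get $\frac{d}{dx}(a\varphi')=a'(x)\varphi'(x)+a(x)\varphi''(x)$ for every $x$.

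\textbf{Step 2.} Multiply by $\rho(x)/2$. Only measurability and boundedness of $\rho$ (Hypothesis~\ref{hypo1}) are used here, and no derivative of $\rho$ appears. This gives
\[
\frac{\rho(x)a(x)}{2}\varphi''(x)+\frac{\rho(x)a'(x)}{2}\varphi'(x).
\]

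\textbf{Step 3.} Add the drift term $b(x)\varphi'(x)$ and group the coefficients of $\varphi'$. This yields
\[
\frac{\rho a}{2}\varphi''+\Bigl(\frac{\rho a'}{2}+b\Bigr)\varphi',
\]
which is the claimed non-divergence form.

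The only point that needs care is that $\rho$ and $b$ are merely measurable and bounded. The identity should therefore be read as an equality of functions defined pointwise by the formula, or almost everywhere if $\rho$ and $b$ are only defined a.e. This causes no difficulty, because neither $\rho$ nor $b$ is differentiated. One may also note that the resulting expression lies in $L^2(\mathbb R)$, and hence in $\mathbb H$: the coefficients $\rho a$ and $\rho a'/2+b$ are bounded on the compact support of $\varphi$, since $a'$ is continuous there. This remark is optional and not needed for the identity itself.
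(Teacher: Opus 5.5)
Your proof is correct and matches the paper's argument: apply the product rule to $a\varphi'$, multiply by $\rho/2$, and add the drift term $b\varphi'$. Your observation that $\rho$ and $b$ are never differentiated, so only their measurability and boundedness are used, is a clarification that the paper leaves implicit.
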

\begin{proof}
Let $\varphi\in C_c^\infty(\mathbb{R})$. Since $a\in C^1(\mathbb{R})$, the product
rule yields
\[
\frac{d}{dx}\!\left(a(x)\frac{d\varphi}{dx}(x)\right)
=
a(x)\varphi''(x)+a'(x)\varphi'(x).
\]
Substituting this identity into the definition of
$\mathcal{A}_{\rho,a,b}$, we obtain
\[
(\mathcal{A}_{\rho,a,b}\varphi)(x)
=
\frac{\rho(x)}{2}
\left(a(x)\varphi''(x)+a'(x)\varphi'(x)\right)
+
b(x)\varphi'(x),
\]
which simplifies to the claimed non-divergence form.
\end{proof}

\begin{lemma}[Adjoint operator in the smooth case]\label{lem:adjoint-smooth}
Assume that the coefficients $a$, $\rho$, and $b$ belong to $C^1(\mathbb{R})$.
Then the adjoint of the operator $\mathcal{A}_{\rho,a,b}$ in $\mathbb{H}$
%$L^2(\mathbb{R},\rho(x)^{-1}dx)$ 
is given by
\[
(\mathcal{A}^*_{\rho,a,b}\psi)(x)
=
\frac{\rho(x)}{2}\frac{d}{dx}\!\left(a(x)\frac{d\psi}{dx}(x)\right)
-\rho(x)\frac{d}{dx}\!\left(\frac{b(x)}{\rho(x)}\psi(x)\right),
\qquad x\in\mathbb{R},
\]
for any $\psi\in C_c^\infty(\mathbb{R})$.
\end{lemma}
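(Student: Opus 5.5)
We compute the adjoint directly from the weighted inner product of $\mathbb{H}$. We fix $\varphi,\psi\in C_c^\infty(\mathbb{R})$ and aim to show
\[
\langle \mathcal{A}_{\rho,a,b}\varphi,\psi\rangle_{\mathbb{H}}
=\langle \varphi,\mathcal{A}^*_{\rho,a,b}\psi\rangle_{\mathbb{H}},
\]
with $\mathcal{A}^*_{\rho,a,b}$ the claimed expression. By density of $C_c^\infty(\mathbb{R})$ in $\mathbb{H}$, this identifies the formal adjoint on test functions, which is what the statement asserts.

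\textbf{Step 1: the diffusion term.} Because of the weight $\rho^{-1}$, the factor $\rho$ in front of the diffusion part cancels:
\[
\int_{\mathbb{R}}\frac{\rho}{2}\,(a\varphi')'\,\psi\,\frac{dx}{\rho}=\frac12\int_{\mathbb{R}}(a\varphi')'\psi\,dx .
\]
We integrate by parts twice. The boundary terms vanish by compact support, and $a\in C^1$ justifies each step. This gives $\frac12\int \varphi\,(a\psi')'\,dx$. Reinserting $\rho/\rho$, it equals $\int \varphi\,\frac{\rho}{2}(a\psi')'\,\frac{dx}{\rho}$, which is the first term of $\mathcal{A}^*_{\rho,a,b}\psi$.

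\textbf{Step 2: the drift term.} We write
\[
\int_{\mathbb{R}} b\varphi'\psi\,\frac{dx}{\rho}=\int_{\mathbb{R}}\varphi'\,\Big(\frac{b}{\rho}\psi\Big)dx
\]
and integrate by parts once. Since $b,\rho\in C^1$ and $\rho\ge\lambda>0$, the function $b/\rho\in C^1$, so this step is legitimate. We obtain $-\int\varphi\,(\frac{b}{\rho}\psi)'\,dx$. Multiplying and dividing by $\rho$ gives $\int\varphi\,\big[-\rho(\frac{b}{\rho}\psi)'\big]\frac{dx}{\rho}$, which is the second term.

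Summing Steps 1 and 2 yields the identity. There is no real obstacle in this argument. The only points needing care are the bookkeeping of the weight $\rho^{-1}$ and checking that every integrand has enough regularity for integration by parts. The diffusion term requires only $a\in C^1$, since $\rho$ cancels there. The drift term requires differentiability of $b/\rho$, and this is where the hypotheses $\rho\in C^1$ and $\rho\ge\lambda$ from Hypothesis~\ref{hypo1} enter.
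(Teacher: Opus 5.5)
Your proof is correct and follows essentially the same route as the paper: you split $\langle \mathcal{A}_{\rho,a,b}\varphi,\psi\rangle_{\mathbb{H}}$ into the diffusion part $\frac12\int (a\varphi')'\psi\,dx$, where the weight cancels $\rho$, and the drift part $\int \varphi'\,\frac{b}{\rho}\,\psi\,dx$, then integrate by parts twice and once respectively. The paper first passes through the non-divergence form of Lemma~\ref{rem:basic-props} and then regroups into these same two integrals, so working directly in divergence form as you do is, if anything, slightly cleaner.
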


\begin{proof}
Let $\varphi,\psi\in C_c^\infty(\R).$ %and define the weighted inner product of $H$.
%\begin{equation}\label{prod-scalar}
%\langle \varphi,\psi\rangle_{L^2(\rho^{-1})}
%:=\int_{\mathbb{R}} \varphi(x)\,\psi(x)\,\frac{dx}{\rho(x)}.
%\end{equation}
Assuming that
$a,\rho,b\in C^1(\mathbb{R})$, and using the non-divergence form given in Remark~\ref{rem:basic-props}, we have for any $\psi,\varphi\in C_c^\infty(\mathbb{R})$,
\begin{align*}
\langle \mathcal{A}_{\rho,a,b}\psi,\varphi\rangle_{H}
&=\int_{\mathbb{R}}
\left(
\frac{\rho(x)a(x)}{2}\psi''(x)
+\left(\frac{\rho(x)a'(x)}{2}+b(x)\right)\psi'(x)
\right)\varphi(x)\,\frac{dx}{\rho(x)}\\
&=\frac12\int_{\mathbb{R}} a(x)\psi''(x)\varphi(x)\,dx
+\int_{\mathbb{R}}\psi'(x)\left(\frac{a'(x)}{2}
+\frac{b(x)}{\rho(x)}\right)\varphi(x)\,dx\\
&=: I_1+I_2.
\end{align*}

Since $\varphi$ and $\psi$ have compact support, all boundary terms vanish.

For $I_1$, an integration by parts yields
\[
I_1
=\frac12\int_{\mathbb{R}} (a\psi')'\,\varphi\,dx
=-\frac12\int_{\mathbb{R}} a(x)\psi'(x)\varphi'(x)\,dx
=\frac12\int_{\mathbb{R}} (a\varphi')'(x)\,\psi(x)\,dx.
\]
Therefore,
\[
I_1
=\int_{\mathbb{R}}
\psi(x)\,\frac{\rho(x)}{2}(a\varphi')'(x)\,\frac{dx}{\rho(x)}
=\left\langle \psi,\frac{\rho}{2}(a\varphi')'\right\rangle_{\mathbb{H}}.
\]

For $I_2$, another integration by parts gives
\[
I_2
=\int_{\mathbb{R}} \psi'(x)\,\frac{b(x)}{\rho(x)}\,\varphi(x)\,dx
=-\int_{\mathbb{R}} \psi(x)\,\frac{d}{dx}\!\left(\frac{b(x)}{\rho(x)}\varphi(x)\right)\,dx.
\]
Rewriting this in the weighted inner product form,
\[
I_2
=\int_{\mathbb{R}}
\psi(x)\left(-\rho(x)\frac{d}{dx}\!\left(\frac{b(x)}{\rho(x)}\varphi(x)\right)\right)\frac{dx}{\rho(x)}
=\left\langle \psi,-\rho\,\frac{d}{dx}\!\left(\frac{b}{\rho}\varphi\right)\right\rangle_{\mathbb{H}}.
\]

Combining $I_1$ and $I_2$, we obtain
\[
\langle \mathcal{A}_{\rho,a,b}\psi,\varphi\rangle_{\mathbb{H}}
=
\left\langle \psi,
\frac{\rho}{2}(a\varphi')'
-\rho\,\frac{d}{dx}\!\left(\frac{b}{\rho}\varphi\right)
\right\rangle_{\mathbb{H}}.
\]
Hence the adjoint operator $\mathcal{A}^*_{\rho,a,b}$ in $\mathbb{H}$ is
\[
(\mathcal{A}^*_{\rho,a,b}\varphi)(x)
=
\frac{\rho(x)}{2}\frac{d}{dx}\!\left(a(x)\frac{d\varphi}{dx}(x)\right)
-\rho(x)\frac{d}{dx}\!\left(\frac{b(x)}{\rho(x)}\varphi(x)\right),
\qquad x\in\mathbb{R},
\]
as claimed.
\end{proof}

\begin{remark}\label{Remark_adjoint}
\begin{enumerate}
\item If $b\equiv 0$, the adjoint operator $\mathcal{A}_{\rho,a,0}^*$ coincides with
$\mathcal{A}_{\rho,a,0}$; hence $\mathcal{A}_{\rho,a,0}$ is self-adjoint in
$\mathbb{H}$.

\item In the constant coefficient case $a(x)\equiv a_0$, $\rho(x)\equiv\rho_0$ and
$b(x)\equiv b_0$, the adjoint operator reduces to
\[
\mathcal A^*_{\rho,a,b}\psi
=
\frac{\rho_0 a_0}{2}\psi''-b_0\psi'.
\]
Hence $\mathcal A^*_{\rho,a,b}=\mathcal A_{\rho,a,-b}$, and the operator is
self-adjoint if and only if $b_0=0$.
\end{enumerate}
\end{remark}

%\paragraph{Non-Smooth coefficient case.}
\paragraph{Piecewise constant coefficient case.}
Assume that the coefficients $(\rho,a,b)$ are piecewise constant functions  of
the form
\begin{equation}\label{coeff-piecewise}
\begin{aligned}
a(x) &= a^-\,\mathbbm{1}_{(-\infty,0]}(x)+a^+\,\mathbbm{1}_{(0,+\infty)}(x),\\
\rho(x) &= \rho^-\,\mathbbm{1}_{(-\infty,0]}(x)+\rho^+\,\mathbbm{1}_{(0,+\infty)}(x),\\
b(x) &= b^-\,\mathbbm{1}_{(-\infty,0]}(x)+b^+\,\mathbbm{1}_{(0,+\infty)}(x),
\end{aligned}
\end{equation}
where $a^\pm,\rho^\pm>0$ and $b^\pm\in\mathbb{R}$ are constants.

\begin{lemma}[Adjoint operator in the piecewise constant case]\label{lem:adjoint-piecewise}
Under the above piecewise-constant assumption on $(\rho,a,b)$, for any $\phi,\varphi\in C_c^\infty(\mathbb{R})$, the adjoint of
$\mathcal{A}_{\rho,a,b}$ in $\mathbb{H}$ satisfies
\[
\begin{aligned}
\langle \phi, \mathcal{A}_{\rho,a,b}^*\varphi\rangle_{\mathbb{H}}
&=
\left\langle \phi,
C_\pm^{\rho,a}\,\varphi'(0)\,\delta_0
+
C_\pm^{\rho,b}\,\varphi(0)\,\delta_0
\right\rangle
+
\langle \phi, \mathcal{B}_{\rho,a,b}^*\varphi\rangle_{\mathbb{H}},
\end{aligned}
\]
where
\[
(\mathcal{B}_{\rho,a,b}^*\varphi)(x)
=
\frac{\rho(x)}{2}\frac{d}{dx}\!\left(a(x)\frac{d\varphi}{dx}(x)\right)
-
b(x)\frac{d\varphi}{dx}(x),
\qquad x\neq 0,
\]
and the interface coefficients are given by
\[
C_\pm^{\rho,a}=\frac{\rho^-}{2}(a^+-a^-),
\qquad
C_\pm^{\rho,b}
=
-\left(\frac{\rho^-}{\rho^+}b^+ + b^-\right).
\]
\end{lemma}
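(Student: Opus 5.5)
We compute the adjoint by the same route as Lemma~\ref{lem:adjoint-smooth}: start from $\langle \mathcal{A}_{\rho,a,b}\phi,\varphi\rangle_{\mathbb{H}}$, split the real line into $(-\infty,0)$ and $(0,\infty)$, and integrate by parts on each half-line separately. On each half-line the coefficients are constants, so we may use the constant-coefficient computation of Remark~\ref{Remark_adjoint}. The boundary terms at $\pm\infty$ vanish by compact support. The boundary terms at $x=0^\pm$ do not vanish, and they produce the Dirac contributions at the interface.

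Concretely, the weight $1/\rho$ cancels the prefactor $\rho/2$, so the diffusion part reads $\frac12\int a\phi''\varphi\,dx$ on each side. We integrate by parts twice on $(-\infty,0)$ and on $(0,\infty)$. This gives the bulk term $\frac12\int \phi\,(a\varphi')'\,dx$, which equals $\langle\phi,\frac{\rho}{2}(a\varphi')'\rangle_{\mathbb{H}}$ for $x\neq0$. It also gives the one-sided boundary terms
\[
\tfrac12\big[a\phi'\varphi-a\phi\varphi'\big]_{0^+}^{0^-}.
\]
The drift part is $\int \frac{b}{\rho}\phi'\varphi\,dx$. One integration by parts on each side gives $-\int \phi\,\frac{b}{\rho}\varphi'\,dx$, which is $\langle\phi,-b\varphi'\rangle_{\mathbb{H}}$ and yields the $\mathcal{B}^*$ drift term. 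It also gives the boundary term $\big[\frac{b}{\rho}\phi\varphi\big]_{0^+}^{0^-}$.

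The remaining task, and the delicate point, is to organise the interface terms into the stated form. We use that $\phi,\varphi\in C_c^\infty$ are continuous with continuous derivatives at $0$. The term $\frac12(a^--a^+)\phi'(0)\varphi(0)$ involves $\phi'$. To place everything against $\phi$, we pair with $\delta_0$ and its derivative, or equivalently interpret the product as the distributional flux-jump condition. The term $-\frac12(a^--a^+)\phi(0)\varphi'(0)$ gives $\langle\phi,\frac{1}{2}(a^+-a^-)\varphi'(0)\delta_0\rangle$. Converting from the $dx$ pairing to the $\mathbb{H}$-weighting at the point $0$ contributes the factor $\rho(0)=\rho^-$, since $\rho$ is left-continuous with $\rho(0)=\rho^-$. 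This yields $C_\pm^{\rho,a}=\frac{\rho^-}{2}(a^+-a^-)$.

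Similarly, the drift boundary terms are $\big(\frac{b^-}{\rho^-}-\frac{b^+}{\rho^+}\big)\phi(0)\varphi(0)$. Multiplying by $\rho^-$ gives a coefficient of the form $b^- -\frac{\rho^-}{\rho^+}b^+$. Matching the stated $C_\pm^{\rho,b}=-\big(\frac{\rho^-}{\rho^+}b^++b^-\big)$ requires tracking the orientation of the one-sided limits and the sign convention for the outward normals on each half-line. I expect this sign and weight bookkeeping at the interface to be the main obstacle. I would check it carefully by testing with $\phi,\varphi$ supported near $0$, and I would use the paper's convention for $\langle\cdot,\delta_0\rangle$ in $\mathbb{H}$ to fix the factors of $\rho^\pm$.
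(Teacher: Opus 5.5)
Your route (split at $0$ and integrate by parts on each half-line) is the paper's, but the diffusion part has a genuine gap. You read $\frac{d}{dx}(a\phi')$ as $a\phi''$ on each half-line. That discards the singular part of $\mathcal{A}_{\rho,a,b}\phi$ at the interface, and it is exactly why you are left with the unwanted term $\frac12(a^--a^+)\phi'(0)\varphi(0)$. For $\phi\in C_c^\infty(\mathbb{R})$ the flux $a\phi'$ jumps by $(a^+-a^-)\phi'(0)$ at $0$. In the divergence-form, distributional sense (the sense in which the paper's proof reads the operator) one therefore has $\frac{d}{dx}(a\phi')=a\phi''\,\mathbbm{1}_{\{x\neq 0\}}+(a^+-a^-)\phi'(0)\,\delta_0$. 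This Dirac mass contributes $+\frac12(a^+-a^-)\phi'(0)\varphi(0)$ to $\langle\mathcal{A}_{\rho,a,b}\phi,\varphi\rangle_{\mathbb{H}}$ and cancels your leftover term exactly. Equivalently, start from $-\frac12\int_{\mathbb{R}}a\phi'\varphi'\,dx$ and integrate by parts only once on each half-line, as the paper does. Neither of your proposed repairs works. Pairing with $\delta_0'$ adds a term proportional to $(a^+-a^-)\varphi(0)\langle\phi,\delta_0'\rangle$, which is absent from the statement. A flux transmission condition cannot be imposed on arbitrary $\phi\in C_c^\infty(\mathbb{R})$, because it fails as soon as $a^+\neq a^-$ and $\phi'(0)\neq 0$.

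On the drift part, by contrast, your computation is right and the target is not. Your boundary term $\big(\frac{b^-}{\rho^-}-\frac{b^+}{\rho^+}\big)\phi(0)\varphi(0)$ is correct. No orientation or normal-vector convention will turn $b^--\frac{\rho^-}{\rho^+}b^+$ into the stated $-\big(\frac{\rho^-}{\rho^+}b^++b^-\big)$, because the sign of $b^-$ in the statement is wrong. The paper's proof reaches it through a slip on the negative half-line: it writes $\int_{(-\infty,0)}\phi'\varphi\,dx=-\phi(0)\varphi(0)-\int_{(-\infty,0)}\phi\varphi'\,dx$, whereas $[\phi\varphi]_{-\infty}^{0}=+\phi(0)\varphi(0)$. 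The analogous step in its diffusion computation has the correct sign. The test you planned settles this. With $a^\pm=a_0$, $\rho^\pm=\rho_0$, $b^\pm=b_0$ there is no interface, and Lemma~\ref{lem:adjoint-smooth} and Remark~\ref{Remark_adjoint} give no Dirac term. The stated formula nevertheless produces $-2b_0\varphi(0)\delta_0$, while your coefficient vanishes. So, once the diffusion term is handled distributionally, your argument proves the identity with $C_\pm^{\rho,a}=\frac{\rho^-}{2}(a^+-a^-)$ and the corrected $C_\pm^{\rho,b}=b^--\frac{\rho^-}{\rho^+}b^+$. This uses the paper's convention that $\delta_0$ is paired in $\mathbb{H}$ with weight $1/\rho(0)=1/\rho^-$. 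You should record that correction rather than look for a convention that reproduces the printed constant.
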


\begin{proof}
Let $\varphi,\phi \in C_c^\infty(\mathbb{R}).$
From (\ref{operator}), we decompose
\[
\langle \mathcal A_{\rho,a,b}\phi,\varphi\rangle_{\mathbb{H}}
= J_a + J_b,
\]
where
\[
J_a := \frac12\int_{\mathbb{R}} \frac{d}{dx}\!\left(a(x)\phi'(x)\right)\varphi(x)\,dx,
\qquad
J_b := \int_{\mathbb{R}} \phi'(x)\frac{b(x)}{\rho(x)}\varphi(x)\,dx.
\]

In what follows, we treat separately the terms $J_a$ and $J_b$.

\textbf{Treatment of $J_a$.}
Using the definition of the derivative in the sense of distributions applied to
the locally integrable function
$x \mapsto a(x)\phi'(x)$, we obtain
\begin{align*}
J_a
&= -\frac12\int_{\mathbb{R}} a(x)\phi'(x)\varphi'(x)\,dx \\
&= -\frac12\int_{\mathbb{R}^*} a(x)\phi'(x)\varphi'(x)\,dx \\
&= -\frac{a^+}{2}\int_{(0,+\infty)} \phi'(x)\varphi'(x)\,dx
   -\frac{a^-}{2}\int_{(-\infty,0)} \phi'(x)\varphi'(x)\,dx.
\end{align*}

Integrating by parts on each half-line yields
\[
\int_{(0,+\infty)} \phi'(x)\varphi'(x)\,dx
=
-\phi(0)\varphi'(0)
-\int_{(0,+\infty)} \phi(x)\varphi''(x)\,dx,
\]
and
\[
\int_{(-\infty,0)} \phi'(x)\varphi'(x)\,dx
=
\phi(0)\varphi'(0)
-\int_{(-\infty,0)} \phi(x)\varphi''(x)\,dx.
\]

Therefore,
\[
\begin{aligned}
J_a
&= \frac12(a^+-a^-)\phi(0)\varphi'(0)  +\int_{\mathbb{R}^*} \phi(x)\,
\frac{\rho(x)}{2}\frac{d}{dx}\!\left(a(x)\varphi'(x)\right)
\frac{dx}{\rho(x)} \\
&= \big\langle \phi,
C_\pm^{\rho,a}\,\varphi'(0)\delta_0 \big\rangle
+ \langle \phi,\mathcal B^1_{\rho,a}\varphi\rangle_{\mathbb{H}},
\end{aligned}
\]
where
\[
C_\pm^{\rho,a}=\frac{\rho^-}{2}(a^+-a^-),
\qquad
\mathcal B^1_{\rho,a}\varphi(x)
=\frac{\rho(x)}{2}\frac{d}{dx}\!\left(a(x)\varphi'(x)\right).
\]

\textbf{Treatment of $J_b$.} We write
\[
\begin{aligned}
J_b
&= \int_{\mathbb{R}^*} \phi'(x)\frac{b(x)}{\rho(x)}\varphi(x)\,dx \\
&= \frac{b^+}{\rho^+}\int_{(0,+\infty)} \phi'(x)\varphi(x)\,dx
 +\frac{b^-}{\rho^-}\int_{(-\infty,0)} \phi'(x)\varphi(x)\,dx.
\end{aligned}
\]

Integrating by parts on each half-line gives
\[
\int_{(0,+\infty)} \phi'(x)\varphi(x)\,dx
=
-\phi(0)\varphi(0)
-\int_{(0,+\infty)} \phi(x)\varphi'(x)\,dx,
\]
and
\[
\int_{(-\infty,0)} \phi'(x)\varphi(x)\,dx
=
-\phi(0)\varphi(0)
-\int_{(-\infty,0)} \phi(x)\varphi'(x)\,dx.
\]

Hence,
\[
\begin{aligned}
J_b
&= -\left(\frac{b^+}{\rho^+}+\frac{b^-}{\rho^-}\right)\varphi(0)\phi(0) 
 -\int_{\mathbb{R}^*} b(x)\varphi'(x)\phi(x)\frac{dx}{\rho(x)} \\
&= \big\langle \phi,
C_\pm^{\rho,b}\,\varphi(0)\delta_0 \big\rangle
- \langle \phi,\mathcal B^2_{\rho,b}\varphi\rangle_{\mathbb{H}},
\end{aligned}
\]
with
\[
C_\pm^{\rho,b}
=-\left(\frac{\rho^-}{\rho^+}b^+ + b^-\right),
\qquad
\mathcal B^2_{\rho,b}\varphi(x)=b(x)\varphi'(x).
\]
Combining the expressions of $J_a$ and $J_b$ yields the claimed formula for the adjoint operator.
\end{proof}

\begin{remark}[Case $b=0$]
Assume that $b\equiv 0$.
Then the adjoint operator $\mathcal A_{\rho,a,0}^*$ coincides with the formal
adjoint of the second-order operator given in \cite{ATZ}, in the
sense of distributions,
\[
\mathcal A_{\rho,a,0}^*\varphi
=
\mathcal A_{\rho,a,0}\varphi
+
C_\pm^{\rho,a}\,\varphi'(0)\,\delta_0.
\]
In particular, the singular contribution at $x=0$ arises solely from the jump of
the diffusion coefficient $a$.
\end{remark}
\subsection{Forward SPDE existence and unicity}

\begin{theorem}[Existence and Uniqueness of a Mild Forward Solution]\label{exitforward}
Assume that Hypotheses \textnormal{\ref{hypo1}}, 
\textnormal{\ref{hypo2}} and \textnormal{\ref{hypo3}} hold.
Let $(P_t)_{t\ge0}$ be the $C_0$-semigroup on $\mathbb{H}.$ %generated by the
%realization of the operator $\mathcal A_{\rho,a,b}$, defined by
%\[(P_t \varphi)(x)=\int_{\mathbb{R}} p(t,x,y)\,\varphi(y)\,\frac{dy}{\rho(y)},
%\]
%where $p(t,x,y)$ denotes the associated transition kernel.
Then the forward SPDE given in \eqref{eq.1} admits a unique mild solution
\[
Y \in L^2\big(\Omega; C([0,T];\mathbb{H})\big),
\]
which satisfies, for all $t\in[0,T]$,
\begin{equation}\label{mild_forward}
Y(t)=P_t\xi+\int_0^t P_{t-s}\kappa\big(s,\cdot,Y(s,\cdot)\big)\,ds+
\int_0^t P_{t-s}\sigma\big(s,\cdot,Y(s,\cdot)\big)\,dB(s),
\end{equation}
where the first integral is understood in the classical sense in $\mathbb{H}$ and the second one in the It\^o sense in $\mathbb{H}$.
Moreover, the solution is pathwise unique.
\end{theorem}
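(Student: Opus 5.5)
We will follow the classical Da Prato--Zabczyk fixed-point approach for mild solutions in $\mathbb{H}$.

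\textbf{Step 1: the semigroup.} We first justify that $\mathcal A_{\rho,a,b}$, realized through the bilinear form of Lemma~\ref{lem:coercivity} on $V=H^1(\mathbb R)\subset\mathbb H\subset V^*$, generates a $C_0$-semigroup $(P_t)$ on $\mathbb H$. The form
\[
\mathcal E(u,v)=\tfrac12\int\rho a\,u'v'\,dx-\int b\,u'v\,dx
\]
is continuous on $V\times V$ by Hypothesis~\ref{hypo1}, and it is weakly coercive by Lemma~\ref{lem:coercivity}. Note that the weight $\rho^{-1}$ in the $\mathbb H$ pairing is exactly what turns $\frac{\rho}{2}(a u')'$ into a divergence-form expression. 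By Lions' theorem, or equivalently Lumer--Phillips applied to $\mathcal A-\lambda_0/2$, the operator is therefore the generator of a quasi-contractive analytic semigroup satisfying
\[
\|P_t\|_{L(\mathbb H)}\le e^{\lambda_0 t/2}\le M_T \quad \text{on } [0,T].
\]

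\textbf{Step 2: the Picard map.} We work in the Banach space $\mathcal S_T$ of $\mathbb F$-adapted, $\mathbb H$-valued continuous processes with norm $\big(\mathbb E\sup_{t\le T}\|Y(t)\|_{\mathbb H}^2\big)^{1/2}$, and define $\Gamma Y$ as the right-hand side of \eqref{mild_forward}.

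One point needs care first. As stated, Hypothesis~\ref{hypo2} only gives $|\kappa(t,x,y)|\le L(1+|y|)$, and the constant $1$ is not square-integrable on $\mathbb R$. We will therefore read the growth condition as $|\kappa(t,x,y)|\le L(h(x)+|y|)$ with $h\in L^2$, or equivalently we assume $\kappa(\cdot,\cdot,0),\sigma(\cdot,\cdot,0)\in\mathbb H$. The same issue affects Hypothesis~\ref{hypo3}: a bounded $\xi$ must also be assumed to lie in $\mathbb H$.

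Under these readings, $y\mapsto\kappa(t,\cdot,y(\cdot))$ and $y\mapsto\sigma(t,\cdot,y(\cdot))$ are Lipschitz maps $\mathbb H\to\mathbb H$ with constant $L$, since $\rho$ is bounded above and below, and they have linear growth. The deterministic convolution is then continuous in $t$ with $\mathbb E\sup_t\|\cdot\|^2\le M_T^2 T\,L^2\int_0^T(1+\mathbb E\|Y\|^2)\,ds$.

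\textbf{Step 3: the stochastic convolution (main obstacle).} The delicate point is the continuity and the sup-in-time moment bound for
\[
W_Y(t)=\int_0^tP_{t-s}\sigma(s,Y(s))\,dB(s),
\]
because it is not a martingale. We plan to use the factorization method: pick $\alpha\in(0,1/2)$ and write
\[
W_Y(t)=\frac{\sin\pi\alpha}{\pi}\int_0^t(t-s)^{\alpha-1}P_{t-s}Z(s)\,ds,
\qquad
Z(s)=\int_0^s(s-r)^{-\alpha}P_{s-r}\sigma(r,Y(r))\,dB(r).
\]
For $p>1/\alpha$ this gives
\[
\mathbb E\sup_t\|W_Y(t)\|^p\le C_T\int_0^T\mathbb E\|\sigma(s,Y(s))\|^p\,ds,
\]
together with continuity of paths. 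Working first in $L^p$ with $p>2$ and then returning to $p=2$ is handled by the standard argument. Alternatively, since the semigroup is quasi-contractive, the maximal inequality for stochastic convolutions (Kotelenez, Tubaro) gives the $p=2$ bound directly; we would use this to avoid the $p>2$ detour.

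\textbf{Step 4: contraction and uniqueness.} With these bounds, $\Gamma$ maps $\mathcal S_T$ into itself, and
\[
\mathbb E\sup_{s\le t}\|\Gamma Y-\Gamma Y'\|^2\le C\int_0^t\mathbb E\sup_{r\le s}\|Y-Y'\|^2\,ds.
\]
Hence either the iterates $\Gamma^n$ contract, or $\Gamma$ is a contraction under the equivalent norm weighted by $e^{-\beta t}$ for large $\beta$. Banach's fixed point theorem then yields existence and uniqueness in $\mathcal S_T$. Pathwise uniqueness follows by applying the same Gronwall estimate to two mild solutions defined on the same probability space, which gives indistinguishability since both processes are continuous.
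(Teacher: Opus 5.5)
Your proposal is correct, given the extra integrability you impose, but it obtains the two key ingredients differently from the paper.

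\textbf{The semigroup.} The paper takes $(P_t)$ from a transition kernel and cites an Aronson-type Gaussian bound from Lejay. This gives pointwise control of the kernel, but in $\mathbb H$ it only yields $\sup_{t\le T}\|P_t\|_{\mathcal L(\mathbb H)}\le M_T$. You build $(P_t)$ from the variational form via Lemma~\ref{lem:coercivity}, which gives quasi-contractivity $\|P_t\|\le e^{\lambda_0 t/2}$ without any heat-kernel input.

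\textbf{The stochastic convolution.} This is where your route matters. The paper bounds
\[
\E\sup_{t\le T_0}\Bigl\|\int_0^t P_{t-s}\bigl(G(s,Y^n)-G(s,Y^{n-1})\bigr)\,dB(s)\Bigr\|_{\mathbb H}^2
\]
by applying BDG directly. That is not justified, because the stochastic convolution is not a martingale in $t$. You instead use the Kotelenez maximal inequality, whose hypothesis is exactly the quasi-contractivity you establish (a uniform bound $M_T$ alone does not supply it), or the factorization method. Either one fills that step.

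\textbf{The contraction.} The paper contracts on a short interval with $q(T_0)<1$ and patches intervals together. You contract globally via $\Gamma^n$ or an $e^{-\beta t}$-weighted norm. The two are interchangeable.

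\textbf{The hypotheses.} Your caveat is well founded. As written, Hypotheses~\ref{hypo2}--\ref{hypo3} neither make the Nemytskii maps send $\mathbb H$ into $\mathbb H$ nor place $\xi$ in $\mathbb H$. For example, with $\rho=a=1$, $b=0$, $\kappa\equiv 1$, $\sigma\equiv 0$, $\xi\equiv 0$, one gets $Y(t,x)=t\notin\mathbb H$. The paper's proof tacitly uses the same extra integrability you impose.

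\textbf{A small slip in Step~1.} With the weight $\rho^{-1}$, the form of $\mathcal A_{\rho,a,b}$ in $\mathbb H$ is $\frac12\int a\,u'v'\,dx-\int \frac{b}{\rho}\,u'v\,dx$. It is not $\frac12\int \rho a\,u'v'\,dx-\int b\,u'v\,dx$, which you copied from Lemma~\ref{lem:coercivity} and which generates a different operator. The correct form is continuous and satisfies the same G{\aa}rding inequality under Hypothesis~\ref{hypo1}, so nothing else in your argument changes.
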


\begin{proof}
Let $(P_t)_{t\ge0}$ be the $C_0$-semigroup associated with the operator
$\mathcal A_{\rho,a,b}$, given by the kernel representation
\[
(P_t\varphi)(x)=\int_{\R} p(t,x,y)\,\varphi(y)\,\frac{dy}{\rho(y)},
\]
where $p(t,x,y)$ denotes the associated transition kernel.
By Proposition~2 (Aronson-type estimate) in \cite{lejay1}, there exist
constants $C_1,C_2>0$ such that for all $t\in(0,T]$ and $x,y\in\R$,
\begin{equation}\label{eq:aronson}
p(t,x,y)\le \frac{C_1}{\sqrt{t}}\exp\!\left(-\frac{C_2|x-y|^2}{t}\right).
\end{equation}

In particular, the kernel estimate implies that $(P_t)$ is bounded on $\mathbb{H}$:
there exists $M_T>0$ such that
\begin{equation}\label{eq:Ptbounded}
\|P_t\|_{\mathcal L(\mathbb{H})}\le M_T,\qquad t\in[0,T],
\end{equation}
and thus the deterministic and stochastic convolutions below are well-defined
as $\mathbb{H}$-valued integrals.

Define the maps $F,G:[0,T]\times \mathbb{H}\to \mathbb{H}$ by
\[
(F(t,u))(x)=\kappa(t,x,u(x)),\qquad (G(t,u))(x)=\sigma(t,x,u(x)).
\]
A mild solution of \eqref{eq.1} is an $\mathbb{H}$-valued predictable process $Y$ such that
for all $t\in[0,T]$,
\begin{equation}\label{eq:mildagain}
Y(t)=P_t\xi+\int_0^t P_{t-s}F(s,Y(s))\,ds+\int_0^t P_{t-s}G(s,Y(s))\,dB(s).
\end{equation}
The proof relies on a fixed-point argument based on Picard iteration.
\paragraph{Existence.}
Set $Y^{0}(t):=P_t\xi$ and define recursively, for $n\ge0$,
\begin{equation}\label{eq:picard}
Y^{n+1}(t):=P_t\xi+\int_0^t P_{t-s}F(s,Y^{n}(s))\,ds
+\int_0^t P_{t-s}G(s,Y^{n}(s))\,dB(s),\qquad t\in[0,T].
\end{equation}
Using \eqref{eq:Ptbounded}, Cauchy--Schwarz, BDG inequality and the growth bounds
from Hypotheses \ref{hypo2}--\ref{hypo3}, one obtains for each $n$,
\[
\E\sup_{t\le T}\|Y^{n}(t)\|_{\mathbb{H}}^2<\infty,
\]
hence $Y^{n}\in L^2(\Omega;C([0,T];\mathbb{H}))$ and all integrals in \eqref{eq:picard}
are meaningful.

Fix $T_0\in(0,T]$ (to be chosen) and set
\[
\Delta^{n}(t):=Y^{n+1}(t)-Y^{n}(t),\qquad t\in[0,T_0].
\]
Subtracting \eqref{eq:picard} at ranks $n$ and $n-1$ yields
\[
\Delta^{n}(t)=\int_0^t P_{t-s}\Big(F(s,Y^{n}(s))-F(s,Y^{n-1}(s))\Big)\,ds
+\int_0^t P_{t-s}\Big(G(s,Y^{n}(s))-G(s,Y^{n-1}(s))\Big)\,dB(s).
\]
Using $(a+b)^2\le 2a^2+2b^2$, \eqref{eq:Ptbounded}, Cauchy--Schwarz and BDG, we get
\begin{align*}
\E\sup_{t\le T_0}\|\Delta^{n}(t)\|_{\mathbb{H}}^2
&\le
2 M_T^2\,T_0 \,\E\int_0^{T_0}\|F(s,Y^{n}(s))-F(s,Y^{n-1}(s))\|_{\mathbb{H}}^2\,ds \\
&\quad
+2 C_{\mathrm{BDG}} M_T^2\,\E\int_0^{T_0}\|G(s,Y^{n}(s))-G(s,Y^{n-1}(s))\|_{\mathbb{H}}^2\,ds.
\end{align*}
Applying the Lipschitz bounds from Hypotheses \ref{hypo2}--\ref{hypo3},
there exist constants $L_F,L_G$ such that
\[
\|F(s,u)-F(s,v)\|_{\mathbb{H}}\le L_F\|u-v\|_{\mathbb{H}},\qquad
\|G(s,u)-G(s,v)\|_{\mathbb{H}}\le L_G\|u-v\|_{\mathbb{H}}.
\]
Hence
\begin{align*}
\E\sup_{t\le T_0}\|\Delta^{n}(t)\|_{\mathbb{H}}^2
&\le
2M_T^2\Big(T_0^2L_F^2+C_{\mathrm{BDG}}T_0L_G^2\Big)\,
\E\sup_{t\le T_0}\|\Delta^{n-1}(t)\|_{\mathbb{H}}^2.
\end{align*}
Set
\[
q(T_0):=2M_T^2\Big(T_0^2L_F^2+C_{\mathrm{BDG}}T_0L_G^2\Big).
\]
Choose $T_0>0$ such that $q(T_0)<1$. Then $(\Delta^{n})_{n\ge0}$ converges to $0$
in $L^2(\Omega;C([0,T_0];\mathbb{H}))$, so $(Y^{n})_{n\ge0}$ is Cauchy in that space.
Therefore there exists $Y\in L^2(\Omega;C([0,T_0];\mathbb{H}))$ such that
\[
Y^{n}\longrightarrow Y\quad \text{in }L^2(\Omega;C([0,T_0];\mathbb{H})).
\]

We pass to the limit in \eqref{eq:picard}.
Since $P_{t-s}$ is bounded on $\mathbb{H}$ and $F,G$ are Lipschitz, we have
\[
\int_0^t P_{t-s}F(s,Y^{n}(s))\,ds \to \int_0^t P_{t-s}F(s,Y(s))\,ds
\quad\text{in }L^2(\Omega;\mathbb{H}),
\]
by dominated convergence in Bochner spaces.
Similarly, using BDG and Lipschitz continuity,
\[
\int_0^t P_{t-s}G(s,Y^{n}(s))\,dB(s) \to \int_0^t P_{t-s}G(s,Y(s))\,dB(s)
\quad\text{in }L^2(\Omega;\mathbb{H}).
\]
Thus $Y$ satisfies \eqref{eq:mildagain} for all $t\in[0,T_0]$.

Let $N\in\N$ such that $N T_0\ge T$ and set $t_k:=kT_0\wedge T$.
Assuming $Y$ is constructed on $[0,t_k]$, we solve on $[t_k,t_{k+1}]$ the shifted
mild equation
\[
Y(t)=P_{t-t_k}Y(t_k)+\int_{t_k}^t P_{t-s}F(s,Y(s))\,ds+\int_{t_k}^t P_{t-s}G(s,Y(s))\,dB(s),
\]
by the same Picard argument. This yields a solution
on $[0,T]$.

\paragraph{Uniqueness.}
Let $Y$ and $\widetilde Y$ be two mild solutions on $[0,T_0]$ and set
$Z(t):=Y(t)-\widetilde Y(t)$. Subtracting their mild formulations and repeating the
estimate of Step 3 gives
\[
\E\sup_{t\le T_0}\|Z(t)\|_{\mathbb{H}}^2 \le q(T_0)\,\E\sup_{t\le T_0}\|Z(t)\|_{\mathbb{H}}^2,
\]
hence $Z\equiv 0$ on $[0,T_0]$ since $q(T_0)<1$. Iterating on each interval
$[t_k,t_{k+1}]$ yields $Y=\widetilde Y$ on $[0,T]$, proving pathwise uniqueness.

This completes the proof.
\end{proof}
%%%%%%%%%%%%%%%%%%%%%%Pb Game%%%%%%%%%%%%%%%%%%ù 
\section{The Game Problem}

This paper investigates a class of controlled SPDEs arising in the modeling of composite materials with
spatially heterogeneous properties. The controlled system is described by
\begin{equation}\label{eqcontrol.1}
\left\{
\begin{array}{rcl}
dY(t,x)
&=&\mathcal{A}^{u_1,u_2}_{\rho,a,b} Y(t,x)\, dt
+
\kappa(t,x,Y(t,x),u_1(t),u_2(t))\, dt
\\[0.2cm]
&&
+
\sigma(t,x,Y(t,x),u_1(t),u_2(t))\, dB(t),
\quad (t,x) \in [0,T]\times\mathbb{R},
\\[0.2cm]
Y(0,x) &=& \xi(x), \quad x \in \mathbb{R},
\end{array}
\right.
\end{equation}
where $Y(t,x)$ represents the state of the material at time $t$
and spatial position $x$, for instance temperature,
concentration, or mechanical stress.

\medskip

The processes $u_i(t)=u_i(t,\omega)$, $i=1,2$,
are control processes taking values in a given convex set
$U_i \subset \mathbb{R}$.
We assume that $u_1$ and $u_2$ are $\mathbb{F}$-predictable,
and that the control pair $(u_1,u_2)$ belongs to an admissible set
$\mathcal U_1 \times \mathcal U_2$.

\medskip

The operator $\mathcal{A}^{u_1,u_2}_{\rho,a,b}$
is defined through its action on a sufficiently smooth function
$\varphi$ by
\begin{equation}\label{operatorcontrol}
(\mathcal{A}^{u_1,u_2}_{\rho,a,b}\varphi)(x)
=
\frac{\rho(x)}{2}\,
\frac{d}{dx}
\left(
a(x,u_1)\frac{d\varphi}{dx}(x)
\right)
+
b(x,u_2)\frac{d\varphi}{dx}(x).
\end{equation}

This operator models diffusion and transport phenomena in a spatially
heterogeneous medium.

\begin{itemize}
\item
The term
\[
\frac{\rho(x)}{2}
\frac{d}{dx}
\left(
a(x,u_1)\frac{d\varphi}{dx}
\right)
\]
represents a diffusion operator in divergence form.
The function $\rho(x)$ acts as a spatial weight (for instance related to
density), while $a(x,u_1)$ denotes a diffusivity coefficient
depending on both the spatial position and the control $u_1$.
This term describes a diffusion process with spatially varying
and control-dependent intensity.

\item
The term
$
b(x,u_2)\dfrac{d\varphi}{dx}
$
represents a first-order transport (drift) component.
The function $b(x,u_2)$ models a velocity field depending on the
spatial position and on the control $u_2$.
\end{itemize}

Consequently, the operator
$\mathcal{A}^{u_1,u_2}_{\rho,a,b}$
describes a convection–diffusion dynamics in which
the diffusion intensity is influenced by the control $u_1$
and the transport mechanism is influenced by $u_2$,
while spatial heterogeneities are encoded through
$\rho(x)$ and the $x$-dependence of $a$ and $b$.

\medskip
We impose the following assumptions on the coefficients
$\kappa$, $\sigma$ and on the spatial functions
$\rho$, $a$, and $b$.

\begin{hypothesis}[Assumptions on the spatial coefficients]
\label{hypo_spatial}

Let $\rho : \mathbb R \to (0,\infty)$,
$a : \mathbb R \times U_1 \to (0,\infty)$,
and
$b : \mathbb R \times U_2 \to \mathbb R$
be measurable functions.

We assume the following uniform ellipticity and boundedness
conditions: there exist constants
$0 < \lambda \le \Lambda < \infty$
such that for all $x \in \mathbb R$,
$u_1 \in U_1$, and $u_2 \in U_2$,
\begin{align*}
&\lambda \le \rho(x) \le \Lambda, \\
\lambda \le a(x,u_1) \le &\Lambda,\quad \text{and}\quad 
|b(x,u_2)| \le \Lambda.
\end{align*}
\end{hypothesis}

\begin{hypothesis}[Assumptions on the coefficients]
\label{hypo1u}
The coefficients
\[
\kappa,\sigma :
\Omega \times [0,T] \times \mathbb R \times U_1 \times U_2
\longrightarrow
\mathbb R
\]
are measurable and globally Lipschitz continuous
with respect to the state variable $y$.

More precisely, there exists a constant $L>0$ such that for all
$(t,x) \in [0,T]\times\mathbb R$,
all $u_1\in U_1$, $u_2\in U_2$,
and all $y,y'\in\mathbb R$,
\begin{align*}
|\kappa(t,x,y,u_1,u_2)-\kappa(t,x,y',u_1,u_2)|
&\le L |y-y'|,\\
|\sigma(t,x,y,u_1,u_2)-\sigma(t,x,y',u_1,u_2)|
&\le L |y-y'|.
\end{align*}

Moreover, they satisfy the linear growth condition
\begin{align*}
|\kappa(t,x,y,u_1,u_2)|
&\le L(1+|y|),\\
|\sigma(t,x,y,u_1,u_2)|
&\le L(1+|y|),
\end{align*}
for all $(t,x,y,u_1,u_2)$.
\end{hypothesis}

In what follows, we use the notation $\mathcal{A}^{u_1,u_2}_{\rho,a,b} := \mathcal{A}_{(u_1,u_2)}.
$
\subsection{Objective Function and Hamiltonian System}

We consider a two-player stochastic differential game in a nonzero-sum setting.
Each player $i \in \{1,2\}$ seeks to minimize their own cost functional,
which depends on both controls.
The control pair $(u_1,u_2)$ belongs to an admissible set
$\mathcal U_1 \times \mathcal U_2$.

For each player $i$, the cost functional is defined by
\begin{align*}
J^i(u_1,u_2)
=
\mathbb E \Bigg[
\int_0^T \int_{\mathbb R}
L^i(t,x,Y(t,x),u_1(t),u_2(t))\,\frac{dx}{\rho(x)}\,dt
+
\int_{\mathbb R}
g^i(Y(T,x))\,\frac{dx}{\rho(x)}
\Bigg].
\end{align*}

Each player solves
\[
\text{Player } i:
\qquad
\min_{u_i\in\mathcal U_i}
J^i(u_1,u_2),
\quad
\text{given the other player's control}.
\]  

The state process $Y(t,x)$ evolves according to the controlled SPDE
\begin{align*}
dY(t,x)
&=
\Big[
\mathcal A_{(u_1,u_2)}Y(t,x)
+
\kappa(t,x,Y(t,x),u_1(t),u_2(t))
\Big]dt
\\ &\qquad +
\sigma(t,x,Y(t,x),u_1(t),u_2(t))\,dB(t),
\end{align*}
where $\mathcal A_{(u_1,u_2)}$ is a differential operator
depending on the control pair $(u_1,u_2)$.

The Hamiltonian $H^i$ associated with player $i$ is defined by
\begin{align*}
H^i(t,x,y,u_1,u_2,p^i,q^i)
&=
p^i(t,x)
\Big[
\mathcal A_{(u_1,u_2)} y
+
\kappa(t,x,y,u_1,u_2)
\Big]
\\
&\quad
+
q^i(t,x)\,
\sigma(t,x,y,u_1,u_2)
+
L^i(t,x,y,u_1,u_2).
\end{align*}
Let $(p^i,q^i)$ denote the adjoint processes.
The backward stochastic  partial differential equation BSPDE satisfied by $(p^i,q^i)$ is given by
\begin{align*}
\begin{cases}
-dp^i(t,x)
=
\displaystyle
\frac{\delta H^i}{\delta y}
\big(t,x,Y(t,x),u_1,u_2,p^i,q^i\big)\,dt
-
q^i(t,x)\,dB(t),
\\[0.3cm]
p^i(T,x)
=
g_y^i(Y(T,x)),
\end{cases}
\end{align*}
where $\dfrac{\delta H^i}{\delta y}$ denotes the Fréchet derivative
of $H^i$ with respect to the state variable $y$ in
$\mathbb{H}$.
A direct computation yields:
\begin{align}
\frac{\delta H^i}{\delta y}
=
\mathcal A^*_{(u_1,u_2)} p^i
+
p^i\,\kappa_y
+
q^i\,\sigma_y
+
L_y^i,\end{align}
where $\mathcal A^*_{(u_1,u_2)}$
denotes the adjoint of $\mathcal A_{(u_1,u_2)}$
in $\mathbb{H}$.

Therefore the adjoint equation can be written explicitly as
\begin{align*}
\begin{cases}
-dp^i(t,x)
=
\Big[
\mathcal A^*_{(u_1,u_2)} p^i(t,x)
+
p^i(t,x)\kappa_y
+
q^i(t,x)\sigma_y
+
L_y^i
\Big]dt
-
q^i(t,x)\,dB(t),
\\[0.3cm]
p^i(T,x)
=
g_y^i(Y(T,x)).
\end{cases}
\end{align*}

We assume that, for each $i=1,2$, the functions 
$L^i(t,x,y,u_1,u_2)$ and $g^i(x,y)$ are measurable in 
$(\omega,t,x)$, continuously differentiable with respect to $y$, 
and satisfy standard Lipschitz and linear growth conditions 
ensuring well-posedness of the state and adjoint equations.

To analyze the variational structure, we define the reduced Hamiltonian
as a functional on $\mathbb{H}$ by
\[
h^i(\varphi)
:=
H^i\big(t,x,\varphi,u_1,u_2,p^i,q^i\big),
\]
where $H^i$ depends on $\varphi$ through
$\mathcal A_{(u_1,u_2)}\varphi$ and $\varphi$ itself.

Assuming that $h^i$ is Fréchet differentiable at $\varphi$,
we have the first-order expansion
\[
h^i(\varphi+\psi)
=
h^i(\varphi)
+
\langle D_\varphi h^i(\varphi),\psi\rangle_{\mathbb{H}}
+
o(\|\psi\|_{\mathbb{H}}).
\]

In particular, if $H^i$ depends on $\varphi$
only through the term
$p^i\,\mathcal A_{(u_1,u_2)}\varphi$,
then the directional derivative satisfies
\[
\langle D_\varphi h^i(\varphi),\psi\rangle_{\mathbb{H}}
=
\langle
\mathcal A_{(u_1,u_2)}\psi,
\, p^i
\rangle_{\mathbb{H}}.
\]

By duality, this can equivalently be written as
\[
\langle
\psi,
\, \mathcal A^*_{(u_1,u_2)} p^i
\rangle_{\mathbb{H}}.
\]

%%%%%%%%%%%%%%%%%%%%%%%%%%%%%%Sufficient%%%%%%%%%%%%%%%%%%%%%%%%%%%%%%%%%%%%%%%
\subsection{Sufficient Optimality Condition for Games}

In this section, we derive a sufficient condition under which a given pair of control strategies constitutes a Nash equilibrium in a two-player nonzero-sum stochastic differential game. Our approach extends the classical stochastic maximum principle to the game setting, where each player seeks to minimize their own cost functional subject to a shared controlled state equation. The condition is formulated in terms of the convexity of the Hamiltonians and terminal costs, along with a pointwise minimum condition on the Hamiltonian with respect to each player's control, while fixing the strategy of the opponent. We show that, under these assumptions, no player has an incentive to unilaterally deviate from their strategy, thus verifying optimality in the Nash equilibrium sense.
\begin{theorem}[Sufficient Maximum Principle for Nonzero-Sum Game]\label{thm:suff_game}
Let \( (\widehat{u}_1(t), \widehat{u}_2(t)) \in \mathcal{U}_1 \times \mathcal{U}_2 \) be a candidate Nash equilibrium, with associated state process \( \widehat{Y}(t,x) \) solving the state equation under \( (\widehat{u}_1, \widehat{u}_2) \), and let \( (\widehat{p}^i(t,x), \widehat{q}^i(t,x)) \) be the corresponding adjoint processes solving the BSPDE for player \( i = 1,2 \).

Assume the following conditions hold for each \( i = 1,2 \):
\begin{enumerate}
    \item For a.e. $(t,x)$, the Hamiltonian
\[
(\varphi,u_1,u_2)\longmapsto
H^i\big(t,x,\mathcal A_{(u_1,u_2)}(\varphi),\varphi,u_1,u_2,p^i,q^i\big)
\]
is convex in $(\varphi,u_1,u_2)$.
    
    \item The terminal cost \( g^i(y) \) is convex.

    \item The controls \( \widehat{u}_i(t), i=1,2 \) satisfy the \emph{minimum conditions}:
    \begin{enumerate}
    \item $ \di 
    \widehat{u}_1(t) \in \arg\min_{u_1 \in U_1} \int_{\mathbb{R}} H^1(t,x,\mathcal{A}_{(u_1,\widehat{u}_{2})} \widehat{Y}(t,x), \widehat{Y}(t,x), u_1, \widehat{u}_{2}(t), \widehat{p}^1(t,x), \widehat{q}^1(t,x)) \dfrac{dx}{\rho(x)},
    $\\
    for a.e. \( t \in [0,T] \), where \( \widehat{u}_{2} \) denotes the fixed control of player $2$.
    \item $ \di 
    \widehat{u}_2(t) \in \arg\min_{u_2 \in U_2} \int_{\mathbb{R}} H^2(t,x,\mathcal{A}_{(\widehat{u}_1,u_{2})} \widehat{Y}(t,x), \widehat{Y}(t,x), \widehat{u}_1, u_{2}(t), \widehat{p}^2(t,x), \widehat{q}^2(t,x)) \dfrac{dx}{\rho(x)},
    $\\
    for a.e. \( t \in [0,T] \), where \( \widehat{u}_{1} \) denotes the fixed control of player $1$.
    \end{enumerate}

\end{enumerate}
Then, the pair \( (\widehat{u}_1, \widehat{u}_2) \) forms a Nash equilibrium.
\end{theorem}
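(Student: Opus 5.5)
By symmetry it suffices to treat player $1$: we fix $\widehat u_2$ and show $J^1(u_1,\widehat u_2)\ge J^1(\widehat u_1,\widehat u_2)$ for every $u_1\in\mathcal U_1$. The argument for player $2$ is identical with the roles exchanged. The proof is the classical Hamiltonian convexity argument, carried out in the weighted space $\mathbb H$. Let $Y=Y^{u_1,\widehat u_2}$ be the state under the deviation, and write $\widetilde Y:=Y-\widehat Y$, so that $\widetilde Y(0)=0$. We split
\[
J^1(u_1,\widehat u_2)-J^1(\widehat u_1,\widehat u_2)=I_1+I_2,
\]
where $I_1$ is the running-cost difference $\E\int_0^T\int_{\mathbb R}(L^1-\widehat L^1)\,\frac{dx}{\rho}\,dt$ and $I_2=\E\int_{\mathbb R}\big(g^1(Y(T))-g^1(\widehat Y(T))\big)\frac{dx}{\rho}$.

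\textbf{Step 1: the terminal term.} By convexity of $g^1$ and $\widehat p^1(T)=g^1_y(\widehat Y(T))$,
\[
I_2\ge \E\langle \widehat p^1(T),\widetilde Y(T)\rangle_{\mathbb H}.
\]
We then apply Itô's formula to $\langle \widehat p^1(t),\widetilde Y(t)\rangle_{\mathbb H}$, using the forward equation for $\widetilde Y$ and the BSPDE for $\widehat p^1$. This gives
\[
\E\langle \widehat p^1(T),\widetilde Y(T)\rangle_{\mathbb H}
=\E\int_0^T\Big[\big\langle \widehat p^1,\mathcal A_{(u_1,\widehat u_2)}Y-\mathcal A_{(\widehat u_1,\widehat u_2)}\widehat Y+\kappa-\widehat\kappa\big\rangle_{\mathbb H}
-\Big\langle \frac{\delta \widehat H^1}{\delta y},\widetilde Y\Big\rangle_{\mathbb H}
+\langle \widehat q^1,\sigma-\widehat\sigma\rangle_{\mathbb H}\Big]dt,
\]
since the $dB$ terms are martingales with zero expectation. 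Writing $L^1-\widehat L^1=(H^1-\widehat H^1)-\widehat p^1(\cdots)-\widehat q^1(\cdots)$ and adding $I_1$, the $p$- and $q$-terms cancel. We are left with
\[
I_1+I_2\ge \E\int_0^T\Big[\int_{\mathbb R}(H^1-\widehat H^1)\frac{dx}{\rho}-\Big\langle \frac{\delta \widehat H^1}{\delta y},\widetilde Y\Big\rangle_{\mathbb H}\Big]dt .
\]
Here $\frac{\delta\widehat H^1}{\delta y}=\mathcal A^*_{(\widehat u_1,\widehat u_2)}\widehat p^1+\widehat p^1\widehat\kappa_y+\widehat q^1\widehat\sigma_y+\widehat L^1_y$, using the duality identity $\langle \mathcal A\psi,p\rangle_{\mathbb H}=\langle\psi,\mathcal A^*p\rangle_{\mathbb H}$ from Lemma~\ref{lem:adjoint-smooth}, or Lemma~\ref{lem:adjoint-piecewise} in the piecewise constant case, where the $\delta_0$ interface terms are absorbed into $\mathcal A^*$.

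\textbf{Step 2: convexity and the minimum condition.} By hypothesis 1, the integrated Hamiltonian $\int H^1\,dx/\rho$ is convex in $(\varphi,u_1)$ with $u_2=\widehat u_2$ fixed. Hence
\[
\int(H^1-\widehat H^1)\frac{dx}{\rho}\ge\Big\langle \frac{\delta \widehat H^1}{\delta y},\widetilde Y\Big\rangle_{\mathbb H}+\partial_{u_1}\Big(\int \widehat H^1\frac{dx}{\rho}\Big)(u_1-\widehat u_1).
\]
By the minimum condition 3(a) and convexity of $U_1$, the first-order condition gives $\partial_{u_1}\int\widehat H^1\,\frac{dx}{\rho}\,(u_1-\widehat u_1)\ge0$. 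If differentiability in $u_1$ is unavailable, one can instead use the subgradient form of the convexity inequality directly. Either way $I_1+I_2\ge0$, which is the Nash inequality for player $1$.

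\textbf{Main obstacle.} The hard step is justifying the Itô formula for $\langle \widehat p^1,\widetilde Y\rangle_{\mathbb H}$ when the generator $\mathcal A_{(u_1,\widehat u_2)}$ itself varies with the control, and when the coefficients may be discontinuous. I would proceed in the variational (Gelfand triple) setting $V\subset\mathbb H\subset V^*$ with $V=H^1(\mathbb R)$. Lemma~\ref{lem:coercivity} ensures that $Y,\widehat Y,\widehat p^1\in L^2(\Omega\times[0,T];V)$, so that $\langle\mathcal A Y,\widehat p^1\rangle_{V^*,V}$ is well defined. I would then invoke the Krylov--Rozovskii Itô formula for the product. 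If this regularity is not available, the fallback is to mollify via the semigroup (Yosida approximations) and pass to the limit, treating the interface terms as in Lemma~\ref{lem:adjoint-piecewise}. The remaining integrability needed to kill the stochastic integrals follows from the $L^2(\Omega;C([0,T];\mathbb H))$ bounds of Theorem~\ref{exitforward} together with the analogous bounds for the BSPDE.
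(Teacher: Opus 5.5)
Your proposal is correct and follows the paper's proof essentially step for step: the same splitting $I_1+I_2$, the convexity bound on $g^1$ via $\widehat p^1(T)=g^1_y(\widehat Y(T))$, It\^o's formula for $\langle \widehat p^1,\widetilde Y\rangle_{\mathbb H}$ with the adjoint cancellation, and then joint convexity of $H^1$ in $(\varphi,u_1)$ combined with the first-order form of the minimum condition 3(a). Your extra discussion of justifying the It\^o product formula in the Gelfand triple $V\subset\mathbb H\subset V^*$ covers a point the paper passes over, with one correction: coercivity (Lemma~\ref{lem:coercivity}) by itself does not place $Y$ in $L^2(\Omega\times[0,T];V)$; that comes from variational (Krylov--Rozovskii) well-posedness of the controlled equation, since Theorem~\ref{exitforward} only provides a mild solution of the uncontrolled one.
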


\begin{proof}Fix any admissible control $u_1\in\mathcal U_1$ and consider player~1.
Let $Y(t,x)$ denote the state process associated with $(u_1,\widehat u_2)$.
We compare $J^1(u_1,\widehat u_2)$ with $J^1(\widehat u_1,\widehat u_2)$.
Set $\widetilde Y := Y - \widehat Y$.

We decompose
\[
J^1(u_1,\widehat u_2)
-
J^1(\widehat u_1,\widehat u_2)
=
I_1 + I_2,
\]
where
\begin{align*}
I_1
&=
\mathbb E
\int_0^T
\int_{\mathbb R}
\big(
L^1(t,x,Y,u_1,\widehat u_2)
-
L^1(t,x,\widehat Y,\widehat u_1(t),\widehat u_2(t))
\big)
\frac{dx}{\rho(x)} dt,\\
I_2
&=
\mathbb E
\int_{\mathbb R}
\big(
g^1(Y(T,x))
-
g^1(\widehat Y(T,x))
\big)
\frac{dx}{\rho(x)}.
\end{align*}
By convexity of $g^1$,
\[
I_2
\ge
\mathbb E
\int_{\mathbb R}
\widehat p^1(T,x)\,\widetilde Y(T,x)
\frac{dx}{\rho(x)}.
\]

Define
\[
\widetilde H^1(t,x)
=
H^1(t,x,Y,u_1,\widehat u_2,\widehat p^1,\widehat q^1)
-
H^1(t,x,\widehat Y,\widehat u_1,\widehat u_2,
\widehat p^1,\widehat q^1).
\]

Using the definition of the Hamiltonian,
\[
H^1
=
p^1(\mathcal A_{(u_1,u_2)}y+\kappa)
+
q^1\sigma
+
L^1,
\]
we can rewrite $I_1$ in Hamiltonian form and obtain
\[
I_1
=
\mathbb E
\int_0^T
\int_{\mathbb R}
\Big[
\widetilde H^1
-
\widehat p^1\big(\mathcal A_{(u_1,\widehat u_2)}Y
-
\mathcal A_{(\widehat u_1,\widehat u_2)}\widehat Y
+
\widetilde\kappa^1
\big)
-
\widehat q^1\widetilde\sigma^1
\Big]
\frac{dx}{\rho(x)} dt.
\]
with $\widetilde\sigma$ and $\widetilde\kappa$ denote the following notations:
\begin{align*}
\widetilde{\kappa}^1(t,x)
&=
\kappa(t,x,Y(t,x),u_1(t),\widehat u_2(t))
-
\kappa(t,x,\widehat Y(t,x),\widehat u_1(t),\widehat u_2(t)),\\
\widetilde{\sigma}^1(t,x)
&=
\sigma(t,x,Y(t,x),u_1(t),\widehat u_2(t))
-
\sigma(t,x,\widehat Y(t,x),\widehat u_1(t),\widehat u_2(t)).
\end{align*}

Since the Hamiltonian contains the operator term,
the adjoint equation is written in functional-derivative form:
\[
-d\widehat p^1
=
\frac{\delta H^1}{\delta y}
\big(t,x,\widehat Y,\widehat u_1,\widehat u_2,
\widehat p^1,\widehat q^1\big)\,dt
-
\widehat q^1\,dB.
\]

Applying It\^{o}'s formula to
$\widehat p^1(t,x)\widetilde Y(t,x)$,
integrating in $x$ with weight $\rho^{-1}(x)$,
and using the adjoint relation in
$\mathbb{H}$,
we obtain cancellation of all state-variation terms and deduce
\[
J^1(u_1,\widehat u_2)
-
J^1(\widehat u_1,\widehat u_2)
\ge
\mathbb E
\int_0^T
\int_{\mathbb R}
\Big[
\widetilde H^1
-
\langle D_\varphi H^1,\widetilde Y\rangle
\Big]
\frac{dx}{\rho(x)} dt.
\]

By convexity of $H^1$ in $(\varphi,u_1)$,
\[
\widetilde H^1
\ge
\langle D_\varphi H^1,\widetilde Y\rangle
+
\frac{\partial H^1}{\partial u_1}
(u_1-\widehat u_1).
\]

Therefore,
\[
J^1(u_1,\widehat u_2)
-
J^1(\widehat u_1,\widehat u_2)
\ge
\mathbb E
\int_0^T
\int_{\mathbb R}
\frac{\partial H^1}{\partial u_1}
(t,x)
(u_1-\widehat u_1(t))
\frac{dx}{\rho(x)} dt.
\]

By the minimum condition,
\[
\int_{\mathbb R}
\frac{\partial H^1}{\partial u_1}(t,x)
(u_1-\widehat u_1(t))
\frac{dx}{\rho(x)}
\ge 0,
\]
hence
\[
J^1(u_1,\widehat u_2)
\ge
J^1(\widehat u_1,\widehat u_2).
\]

The same argument applies to player $2$.
Therefore $(\widehat u_1,\widehat u_2)$ is a Nash equilibrium.
\end{proof}

%%%%%%%%%%%%%%%%Necessary cond%%%%%%%%%%%%%%%%%%%%%%%%
\subsection{Necessary Optimality Condition for Games}

In this section we establish a necessary maximum principle for the
two-player nonzero-sum stochastic differential game.
In contrast to the sufficient maximum principle, no convexity
assumptions are imposed. Instead, we derive first-order optimality
conditions by means of variational arguments.

The key idea is to perturb one player's control while keeping the
other player's control fixed, and to analyze the first variation of
the corresponding cost functional.

\begin{itemize}
\item
For each player $i\in\{1,2\}$ and for every $t\in[0,T]$, let
$\beta_{i,0}(t)$ be a bounded $\mathcal F_t$-measurable random variable
satisfying $\|\beta_{i,0}(t)\|\le K$.
Define
\[
\delta_i(t)
=
\frac{1}{2K}\,\text{dist}\!\big(u_i(t),\partial U_i\big)\wedge 1
\;>\;0,
\qquad
\beta_i(t)
=
\delta_i(t)\,\beta_{i,0}(t).
\]
For any $a\in(-1,1)$, the perturbed control for player $i$ is given by
\[
\tilde u_i(t)
=
u_i(t)+a\,\beta_i(t).
\]
By construction, $\tilde u_i(t)\in U_i$ almost surely, and therefore
$\tilde u_i\in\mathcal U_i$.
\end{itemize}

\begin{theorem}[Necessary Maximum Principle for Games]
\label{thm:necessary_game}
Let $(\hat u_1,\hat u_2)\in\mathcal U_1\times\mathcal U_2$
be a Nash equilibrium and let $\widehat Y$
be the corresponding state.
For each $i\in\{1,2\}$ let
$(\widehat p^i,\widehat q^i)$ denote the associated adjoint processes.

Then the following are equivalent:

\begin{enumerate}

\item For every bounded admissible direction $\beta_i$
such that $\hat u_i+a\beta_i\in\mathcal U_i$
for all sufficiently small $a$,
\[
\left.\frac{d}{da}
J^i(\hat u_i+a\beta_i,\hat u_{-i})
\right|_{a=0}
=
0.
\]

\item (Variational inequality.)
For almost every $t\in[0,T]$, almost surely,
\[
\int_{\R}
\partial_{u_i}H^i
\big(t,x,\widehat Y(t,x),\hat u_1(t),\hat u_2(t),
\widehat p^i(t,x),\widehat q^i(t,x)\big)
(v_i-\hat u_i(t))
\frac{dx}{\rho(x)}
\ge 0,
\quad
\forall v_i\in U_i.
\]

If $U_i$ is open (or $\hat u_i(t)\in\mathrm{int}(U_i)$ a.s.),
this reduces to
\[
\partial_{u_i}H^i
\big(t,x,\widehat Y(t,x),\hat u_1(t),\hat u_2(t),
\widehat p^i(t,x),\widehat q^i(t,x)\big)
=
0.
\]
\end{enumerate}
\end{theorem}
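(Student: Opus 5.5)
The argument rests on one identity: the Gateaux derivative of $a\mapsto J^i(\hat u_i+a\beta_i,\hat u_{-i})$ at $a=0$ is the duality pairing between $\partial_{u_i}H^i$ and the direction $\beta_i$. Both implications will then follow from this identity by choosing $\beta_i$ suitably. We treat $i=1$; the case $i=2$ is symmetric.

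\textbf{Step 1: derivative process.} Fix a bounded admissible direction $\beta_1$ and set $Y^a$ for the state under $(\hat u_1+a\beta_1,\hat u_2)$. The candidate derivative $Z=\lim_{a\to0}(Y^a-\widehat Y)/a$ should solve the linearized SPDE
\[
dZ=\big[\mathcal A_{(\hat u_1,\hat u_2)}Z+\partial_{u_1}\mathcal A_{(\hat u_1,\hat u_2)}\widehat Y\,\beta_1+\kappa_yZ+\kappa_{u_1}\beta_1\big]dt+\big[\sigma_yZ+\sigma_{u_1}\beta_1\big]dB,\qquad Z(0)=0,
\]
where $\partial_{u_1}\mathcal A\varphi=\frac{\rho}{2}\frac{d}{dx}(a_{u_1}(x,\hat u_1)\varphi')$ in the variational sense. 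Existence and uniqueness of $Z$ will come from the Picard argument of Theorem~\ref{exitforward}, applied in the variational framework with the coercivity of Lemma~\ref{lem:coercivity}.

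The convergence $(Y^a-\widehat Y)/a\to Z$ in $L^2(\Omega;C([0,T];\mathbb H))$ is the main obstacle. The control enters the principal part of the operator, so the forcing term $\partial_{u_1}\mathcal A\widehat Y$ only makes sense in $V^*$. To handle it, I would:
\begin{itemize}
\item assume $a_{u_1}$ is bounded and continuous in $u_1$;
\item use the energy estimate $\E\int_0^T\|\widehat Y\|_V^2dt<\infty$, which follows from coercivity via Itô's formula for $\|\cdot\|_{\mathbb H}^2$;
\item run a Gronwall argument on $\|(Y^a-\widehat Y)/a-Z\|_{\mathbb H}^2$ in the Gelfand triple $V\subset\mathbb H\subset V^*$;
\item control the remainder terms by dominated convergence, using the $C^1$ dependence of $a,b,\kappa,\sigma$ on the controls.
\end{itemize}

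\textbf{Step 2: duality.} Differentiating $J^1$ under the expectation gives
\[
\frac{d}{da}J^1\Big|_{a=0}=\E\Big[\int_0^T\!\!\int_{\R}(L^1_yZ+L^1_{u_1}\beta_1)\tfrac{dx}{\rho}dt+\int_{\R}g^1_y(\widehat Y(T))Z(T)\tfrac{dx}{\rho}\Big].
\]
Next I apply Itô's formula to $\langle\widehat p^1(t),Z(t)\rangle_{\mathbb H}$, using the adjoint BSPDE and the duality $\langle\mathcal A Z,\widehat p^1\rangle_{\mathbb H}=\langle Z,\mathcal A^*\widehat p^1\rangle_{\mathbb H}$ from Lemmas~\ref{lem:adjoint-smooth} and \ref{lem:adjoint-piecewise}. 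In the piecewise constant case the $\delta_0$ interface terms must be matched by transmission conditions on $\widehat p^1$. Since $Z(0)=0$ and $p^1(T)=g^1_y$, all terms in $Z$ cancel, and I expect
\[
\frac{d}{da}J^1(\hat u_1+a\beta_1,\hat u_2)\Big|_{a=0}=\E\int_0^T\!\!\int_{\R}\partial_{u_1}H^1(t,x,\widehat Y,\hat u_1,\hat u_2,\widehat p^1,\widehat q^1)\,\beta_1(t)\,\tfrac{dx}{\rho}\,dt .
\]
The stochastic integrals have zero mean provided $\E\int\|\widehat q^1\|^2<\infty$.

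\textbf{Step 3: equivalence.} Write $h(t):=\int_{\R}\partial_{u_1}H^1\,\tfrac{dx}{\rho}$.
\begin{itemize}
\item \emph{(1)$\Rightarrow$(2).} Take $\beta_1(s)=\theta\,\mathbbm 1_{[t,t+\varepsilon]}(s)\,\mathbbm 1_A$ with $A\in\mathcal F_t$ and $\theta$ ranging over bounded values compatible with the construction of $\beta_i$ before the theorem. Condition (1) gives $\E[\int_t^{t+\varepsilon}h(s)ds\,\theta\mathbbm 1_A]=0$. Dividing by $\varepsilon$ and letting $\varepsilon\to0$ (Lebesgue differentiation) yields $\E[h(t)\theta\mathbbm 1_A]=0$. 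Since $h(t)$ is $\mathcal F_t$-measurable after conditioning, this gives $h(t)=0$ a.s. when $\hat u_1(t)$ is interior, which implies the variational inequality in (2). For boundary points, one-sided directions $v_1-\hat u_1(t)$ with $a\in(0,1)$ give the inequality, using convexity of $U_1$.
\item \emph{(2)$\Rightarrow$(1).} Applying the inequality with $v_1=\hat u_1\pm\beta_1$, admissible by the construction of $\beta_i$, gives both signs, so $h\beta_1=0$. Plugging this into the formula of Step 2 gives (1).
\end{itemize}
The pointwise-in-$x$ reduction at the end of (2) requires the $x$-integrated expression to vanish. It follows directly only when the control is $x$-dependent; otherwise I would read it as the integrated condition.
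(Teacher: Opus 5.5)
Your proposal is correct and follows essentially the same route as the paper: you linearize the state to get $Z$, apply It\^o's formula to $\langle \widehat p^i, Z\rangle_{\mathbb H}$ with the adjoint duality so that all $Z$-terms cancel, and read the equivalence off the gradient identity $\frac{d}{da}J^i\big|_{a=0}=\E\int_0^T\!\int_{\R}\partial_{u_i}H^i\,\beta_i\,\frac{dx}{\rho(x)}\,dt$. You are also more careful than the paper, which assumes the derivative process exists, does not localize in $(t,\omega)$, and claims pointwise-in-$x$ stationarity where, as you rightly note, only the $x$-integrated condition follows because $\hat u_i(t)$ does not depend on $x$; the one point to state explicitly is that where $\hat u_i(t)\in\partial U_i$ the inequality in (2) comes from the one-sided minimality supplied by the Nash hypothesis, not from (1), which only involves two-sided directions and gives no information there.
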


\begin{proof}
Fix $i\in\{1,2\}$ and denote $\hat u:=(\hat u_i,\hat u_{-i})$.
Let $\beta_i$ be a bounded progressively measurable process such that
$\hat u_i+a\beta_i\in\mathcal U_i$
for all sufficiently small $|a|$.
Define
\[
\tilde u_i^a:=\hat u_i+a\,\beta_i,
\qquad
Y^a:=Y^{(\tilde u_i^a,\hat u_{-i})},
\]
and set $\widehat Y:=Y^{(\hat u_i,\hat u_{-i})}$.

Define the first variation
\[
Z^i(t,x)
=
\left.
\frac{d}{da}
Y^{(\hat u_i+a\beta_i,\hat u_{-i})}(t,x)
\right|_{a=0},
\]
assuming the derivative exists in $\mathbb{H}$.

Differentiating the state equation at $a=0$ yields
\[
\left\{
\begin{aligned}
dZ^i(t,x)
&=
\Big[
\mathcal A_{\hat u} Z^i(t,x)
+
\partial_y\kappa(\hat\Xi(t,x)) Z^i(t,x)
+
\partial_{u_i}\kappa(\hat\Xi(t,x))\,\beta_i(t)
\\
&\qquad
+
(\partial_{u_i}\mathcal A_{\hat u}[\beta_i(t)])\widehat Y(t,x)
\Big]dt
\\
&\quad
+
\Big[
\partial_y\sigma(\hat\Xi(t,x)) Z^i(t,x)
+
\partial_{u_i}\sigma(\hat\Xi(t,x))\,\beta_i(t)
\Big]dB(t),
\\
Z^i(0,x)&=0,
\end{aligned}
\right.
\]
where
\[
\hat\Xi(t,x)
:=
\hat\Xi(t,x,\widehat Y(t,x),\hat u_1(t),\hat u_2(t)),
\]
and
$\partial_{u_i}\mathcal A_{\hat u}[\beta_i(t)]$
denotes the directional derivative of the operator
$\mathcal A_{(u_1,u_2)}$
with respect to $u_i$ at $\hat u=(\hat u_1,\hat u_2)$
in the direction $\beta_i(t)$,
acting on $\widehat Y(t,\cdot)$.

Differentiating the cost functional and using dominated convergence gives
\begin{align*}
\left.\frac{d}{da}
J^i(\hat u_i+a\beta_i,\hat u_{-i})
\right|_{a=0}
&=
\E\Bigg[
\int_0^T\!\!\int_{\R}
\Big(
\partial_y L^i(\hat\Xi) Z^i
+
\partial_{u_i}L^i(\hat\Xi)\beta_i
\Big)
\frac{dx}{\rho(x)} dt
\\
&\qquad
+
\int_{\R}
\partial_y g^i(\widehat Y(T,x)) Z^i(T,x)
\frac{dx}{\rho(x)}
\Bigg].
\end{align*}

Since the Hamiltonian is defined by
\[
H^i(t,x,y,u_1,u_2,p,q)
=
p(\mathcal A_{(u_1,u_2)}y+\kappa)
+
q\sigma
+
L^i,
\]
the adjoint equation must be written in functional derivative form:
%Let $(\widehat p^i,\widehat q^i)$ solve the adjoint equation
\[
\left\{
\begin{aligned}
-d\widehat p^i(t,x)
&=
\frac{\delta H^i}{\delta y}
(t,x,\widehat Y,\hat u_1,\hat u_2,
\widehat p^i,\widehat q^i)\,dt
-
\widehat q^i(t,x)\,dB(t),
\\
\widehat p^i(T,x)
&=
\partial_y g^i(\widehat Y(T,x)).
\end{aligned}
\right.
\]

Apply It\^{o}'s formula to
$\widehat p^i(t,x)Z^i(t,x)$,
integrate in $x$ with weight $\rho^{-1}(x)$,
and use the adjoint relation in
$\mathbb{H}$:
\[
\langle \mathcal A_{\hat u}Z^i,\widehat p^i\rangle
=
\langle Z^i,\mathcal A_{\hat u}^*\widehat p^i\rangle.
\]

All $Z^i$-terms cancel, and we obtain
\begin{align*}
\left.\frac{d}{da}
J^i(\hat u_i+a\beta_i,\hat u_{-i})
\right|_{a=0}
&=
\E
\int_0^T\!\!\int_{\R}
\Big(
\partial_{u_i}L^i(\hat\Xi)
+
\widehat p^i\,\partial_{u_i}\kappa(\hat\Xi)
\\
&\qquad
+
\widehat q^i\,\partial_{u_i}\sigma(\hat\Xi)
+
\widehat p^i\,
(\partial_{u_i}\mathcal A_{\hat u}[\beta_i])\widehat Y
\Big)
\frac{dx}{\rho(x)} dt.
\end{align*}

By definition of the Hamiltonian, we have
\[
\partial_{u_i}H^i
=
\partial_{u_i}L^i
+
\widehat p^i\,\partial_{u_i}\kappa
+
\widehat q^i\,\partial_{u_i}\sigma
+
\widehat p^i\,\partial_{u_i}\mathcal A_{\hat u}\widehat Y.
\]

Therefore,
\[
\left.\frac{d}{da}
J^i(\hat u_i+a\beta_i,\hat u_{-i})
\right|_{a=0}
=
\E
\int_0^T\!\!\int_{\R}
\partial_{u_i}H^i
(t,x,\widehat Y,\hat u_1,\hat u_2,
\widehat p^i,\widehat q^i)
\beta_i(t)
\frac{dx}{\rho(x)} dt.
\]

Consequently, since $(\hat u_1,\hat u_2)$ is a Nash equilibrium,
the first variation must vanish for all admissible directions $\beta_i$.
This is equivalent to the variational inequality
\[
\int_{\R}
\partial_{u_i}H^i
(t,x,\widehat Y,\hat u_1,\hat u_2,
\widehat p^i,\widehat q^i)
(v_i-\hat u_i(t))
\frac{dx}{\rho(x)}
\ge 0,
\]
for all $v_i\in U_i$.
If $U_i$ is open, this reduces to the pointwise stationarity condition
$\partial_{u_i}H^i=0.$ This completes the proof. 
\end{proof}
\subsection{BSPDE : Existence and uniqueness}

In this section, we investigate the well-posedness of the backward SPDE associated with the adjoint system. Our main
objective is to establish the existence and uniqueness of adapted solutions
under suitable structural and integrability assumptions.

The analysis relies on classical arguments for backward stochastic evolution
equations in Hilbert spaces, following the framework developed in \cite{OksendalProskeZhang2005}. In particular, the proof is based on a priori estimates
and standard fixed-point arguments for linear BSPDEs.

To this end, we first introduce a set of additional hypotheses on the
coefficients of the equation, ensuring measurability, Lipschitz continuity,
and appropriate growth conditions. These assumptions allow us to place the
adjoint BSPDE within a well-established existence and uniqueness theory.

\begin{hypothesis}[Additional assumptions for $(\kappa,\sigma)$]
\label{hypoAdj}

In addition to Hypothesis~\ref{hypo1u}, we assume that the coefficients
$\kappa$ and $\sigma$ are continuously differentiable with respect to the
state variable $y$. Moreover, the partial derivatives
\[
\kappa_y(t,x,y,u_1,u_2),
\qquad
\sigma_y(t,x,y,u_1,u_2)
\]
exist and are uniformly bounded. Finally, the adjoint drift term is affine
with respect to the adjoint variables $(p^i,q^i)$.
\end{hypothesis}

\begin{theorem}[Existence and uniqueness of a BSPDE (general case)]
\label{thm:oks-general}

Let $V$ and $\mathbb{H}$ be separable Hilbert spaces such that
\[
V \subset \mathbb{H} \simeq \mathbb{H}^* \subset V^*
\]
with continuous and dense embeddings, and let $\mathcal L_{\rho,a,b}:V\to V^*$ be a linear operator.
Let $\phi\in L^2(\Omega,\mathcal F_T;\mathbb{H})$ and consider the BSPDE
\begin{equation}\label{eq:bspde-general}
\left\{
\begin{aligned}
-dP(t) &= \big[\mathcal L_{\rho,a,b} P(t) + b(t,P(t),Q(t))\big]\,dt - Q(t)\,dB(t),
\qquad t\in[0,T],\\
P(T) &= \phi,
\end{aligned}
\right.
\end{equation}
understood in the variational sense in $V^*$.

Assume that $b:\Omega\times[0,T]\times \mathbb{H}\times \mathbb{H}\to \mathbb{H}$ is progressively measurable and that
there exists a constant $c>0$ such that for all $t\in[0,T]$ and all
$(p_1,q_1),(p_2,q_2)\in \mathbb{H}\times \mathbb{H}$,
\begin{equation}\label{eq:lip-b}
\|b(t,p_1,q_1)-b(t,p_2,q_2)\|_\mathbb{H}
\le c\big(\|p_1-p_2\|_\mathbb{H}+\|q_1-q_2\|_\mathbb{H}\big),
\end{equation}
and
\begin{equation}\label{eq:int-b}
\mathbb E\!\left[\int_0^T \|b(t,0,0)\|_\mathbb{H}^2\,dt\right]<\infty.
\end{equation}

Then there exists a unique adapted solution $(P,Q)$ to \eqref{eq:bspde-general} such that
\[
P\in L^2(\Omega;C([0,T];\mathbb{H}))\cap L^2(\Omega\times(0,T);V),
\qquad
Q\in L^2(\Omega\times(0,T);\mathbb{H}),
\]
and for all $t\in[0,T]$ the following identity holds $\mathbb P$-a.s.:
\[
P(t)=\phi+\int_t^T \big(\mathcal L_{\rho,a,b} P(s)+b(s,P(s),Q(s))\big)\,ds-\int_t^T Q(s)\,dB(s).
\]
\end{theorem}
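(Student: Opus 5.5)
The proof follows the classical variational approach to backward stochastic evolution equations in a Gelfand triple, as in \cite{OksendalProskeZhang2005}. It requires one structural assumption that the statement leaves implicit and that I will take as part of the hypotheses. The operator $-\mathcal L_{\rho,a,b}$ must be bounded from $V$ to $V^*$ and satisfy a Gårding-type coercivity inequality
\[
2\langle -\mathcal L_{\rho,a,b}u,u\rangle_{V^*,V}+\lambda_0\|u\|_{\mathbb H}^2\ge \alpha\|u\|_V^2 .
\]
In the intended application, $\mathcal L_{\rho,a,b}=\mathcal A^*_{(u_1,u_2)}$ with $V=H^1(\mathbb R)$. Both properties then follow exactly as in Lemma~\ref{lem:coercivity}, since the adjoint bilinear form differs from that of $\mathcal A$ only by swapping the first-order term onto the test function, and the bounds of Hypothesis~\ref{hypo_spatial} are uniform in the controls. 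With this in hand, the plan has two steps: solve the linear problem, then use a contraction.

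\textbf{Step 1 (linear case).} For a given $f\in L^2(\Omega\times(0,T);\mathbb H)$, I solve
\[
-dP=(\mathcal L_{\rho,a,b}P+f)\,dt-Q\,dB,\qquad P(T)=\phi .
\]
I do this by Galerkin approximation. Choose an orthonormal basis of $\mathbb H$ contained in $V$ and project onto its first $n$ elements. This gives a finite-dimensional linear BSDE, which has a unique solution $(P_n,Q_n)$ by the Pardoux--Peng theory. Next I apply Itô's formula to $\|P_n(t)\|_{\mathbb H}^2$ backward from $T$ and use the coercivity inequality. Gronwall's lemma then gives the a priori bound
\[
\mathbb E\sup_t\|P_n(t)\|_{\mathbb H}^2+\mathbb E\int_0^T\|P_n\|_V^2\,dt+\mathbb E\int_0^T\|Q_n\|_{\mathbb H}^2\,dt\le C\Big(\mathbb E\|\phi\|_{\mathbb H}^2+\mathbb E\int_0^T\|f\|_{\mathbb H}^2\,dt\Big),
\]
with $C$ independent of $n$. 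By weak compactness in $L^2(\Omega\times(0,T);V)\times L^2(\Omega\times(0,T);\mathbb H)$ I extract a limit and pass to the limit in the variational formulation. Linearity of $\mathcal L_{\rho,a,b}$ makes this passage routine.

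Continuity of $P$ in $\mathbb H$ then follows from the Krylov--Rozovskii Itô formula for $\|P\|_{\mathbb H}^2$ in the triple $V\subset\mathbb H\subset V^*$, combined with BDG. Uniqueness in the linear case comes from the same energy identity applied to the difference of two solutions.

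\textbf{Step 2 (Lipschitz driver).} Define the map $\Phi:(p,q)\mapsto(P,Q)$, where $(P,Q)$ is the Step 1 solution with $f=b(\cdot,p,q)$. This source lies in $L^2$ because of \eqref{eq:lip-b} and \eqref{eq:int-b}. I equip the solution space with the weighted norm
\[
\mathbb E\int_0^T e^{\beta t}\big(\|P\|_{\mathbb H}^2+\|Q\|_{\mathbb H}^2\big)\,dt .
\]
For two inputs $(p_1,q_1)$ and $(p_2,q_2)$ with outputs $(P_1,Q_1)$ and $(P_2,Q_2)$, I apply Itô's formula to $e^{\beta t}\|P_1-P_2\|_{\mathbb H}^2$. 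I then use coercivity, absorbing $\lambda_0$ into $\beta$, together with the Lipschitz bound \eqref{eq:lip-b} and Young's inequality. Choosing $\beta$ large enough makes $\Phi$ a strict contraction, with ratio at most $1/2$. Banach's fixed point theorem then gives existence and uniqueness. The regularity $P\in L^2(\Omega;C([0,T];\mathbb H))\cap L^2(\Omega\times(0,T);V)$ is inherited from Step 1, and the integral identity is exactly the variational formulation evaluated at the fixed point.

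The main obstacle is the rigorous energy identity, namely Itô's formula for $\|P(t)\|_{\mathbb H}^2$ when $P$ is only known in $L^2(V)$ with drift in $L^2(V^*)$. I would cite the Krylov--Rozovskii / Pardoux lemma for this, applied in backward form via time reversal. A secondary point is verifying coercivity for $\mathcal L=\mathcal A^*$. In the piecewise-constant case of Lemma~\ref{lem:adjoint-piecewise}, the interface terms are part of the variational form of $\mathcal A^*$ rather than separate objects. They are absorbed automatically once $\mathcal A^*$ is defined through the bilinear form $\langle u,\mathcal A v\rangle$ on $V$, whose estimate is identical to that in Lemma~\ref{lem:coercivity}.
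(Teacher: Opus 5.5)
Your proposal is correct and follows essentially the same route as the paper. The paper's proof only observes that $\mathcal A^*_{(u_1,u_2)}$ satisfies the standard coercivity/continuity conditions on the Gelfand triple and that the driver is affine with bounded coefficients, then invokes Theorem~4.1 of \cite{OksendalProskeZhang2005}. Your Galerkin a priori estimates followed by the $e^{\beta t}$-weighted contraction are exactly the standard proof of that cited result.

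You are right to make the G{\aa}rding-type coercivity and $V\to V^*$ boundedness of $-\mathcal L_{\rho,a,b}$ explicit, since for an arbitrary linear $\mathcal L_{\rho,a,b}$ the statement is false. Like the paper, you also tacitly need $\mathbb F$ to be the augmented Brownian filtration, so that the martingale representation behind Pardoux--Peng is available.
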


\begin{remark}
The adjoint BSPDE in Theorem~\ref{thm:oks-general} is a particular case of
\eqref{eq:bspde-general} with $P=p^i$ and $Q=q^i$, where the operator
$\mathcal L_{\rho,a,b}$ is given by the adjoint operator associated to
$\mathcal A_{\rho,a,b}.$
More precisely, the drift term takes the form
\[
b(t,p,q)
=
\kappa_y(t)\,p
+
\sigma_y(t)\,q
+
L_y^i(t).
\]
Under Hypothesis~\ref{hypoAdj}, the coefficients $\kappa_y$ and $\sigma_y$ are
uniformly bounded, so that the Lipschitz condition \eqref{eq:lip-b} holds with
\[
c=\|\kappa_y\|_\infty+\|\sigma_y\|_\infty.
\]
Moreover, the integrability condition \eqref{eq:int-b} reduces to
$L_y^i\in L^2(\Omega\times(0,T);\mathbb{H})$.
\end{remark}

\begin{proof}
The adjoint BSPDE is a backward stochastic evolution equation in the Gelfand
triple $V\subset \mathbb{H}\simeq \mathbb{H}^*\subset V^*$. Under Hypotheses~\ref{hypo1} and
\ref{hypoAdj}, the operator $\mathcal A^*_{(u_1,u_2)}$ satisfies the standard
coercivity/continuity assumptions, and the drift term is affine in
$(p^i,q^i)$ with progressively measurable bounded coefficients. Moreover,
$L_y^i\in L^2(\Omega\times(0,T);\mathbb{H})$ and
$\mathbb E\|g_y^i(Y(T))\|_\mathbb{H}^2<\infty$.
Hence the assumptions of Theorem~4.1 in \cite{OksendalProskeZhang2005} are satisfied,
and the existence and uniqueness of an adapted solution
$(p^i,q^i)$ in the stated spaces follow. The integral identity is obtained
by integrating the equation from $t$ to $T$.
\end{proof}

\subsection{Closed form for linear BSPDE}
\subsubsection{Smooth operator}

To illustrate the general theory, we consider a smooth constant coefficient setting in which the coefficients $(\rho,a,b)$ of the operator are fixed real numbers. In this case, an explicit representation of the solution can be obtained. For related results, we refer to \cite{MaYong}.

\medskip

Let $\rho_0,a_0,b_0,c,\gamma \in \mathbb{R}$ satisfy $\rho_0 a_0>0$, and define
\[
\kappa := \frac{\rho_0 a_0}{2},
\qquad
\sigma := \sqrt{\rho_0 a_0}.
\]
And, let $(W_t)_{t\ge0}$ be a one-dimensional Brownian motion on a filtered probability space
$(\Omega,\mathcal F,\mathbb{F},\mathbb P)$.

\begin{Example}\label{exBSPDE-Smooth}
Let $c,\gamma \in \mathbb{R}$. Consider the following BSPDE
\begin{equation}\label{BSPDE-main-f}
\begin{cases}
du(t,x)
=
-\Big[
\kappa u_{xx}(t,x)
-
b_0 u_x(t,x)
+
c\,u(t,x)
+
\gamma\, q(t,x)
+
f(t,x)
\Big]dt
+
q(t,x)\, dW_t,\\
u(T,x)=g(x),
\end{cases}
\end{equation}
for any $(t,x)\in [0,T]\times \R.$ Here, the functions $g$ and $f$ are assumed to be bounded and measurable. 
\end{Example}

\begin{theorem}[Explicit solution with $q$-drift and source term] \label{thm:explicitBSPDE}

The BSPDE defined in \eqref{BSPDE-main-f} admits a unique adapted classical solution satisfies
\begin{equation}\label{representationsmoothcase}
u(t,x)
=
\mathbb E^{\mathbb Q}
\Bigg[
e^{c(T-t)} g(X_T^{t,x})
+
\int_t^T
e^{c(s-t)} f(s,X_s^{t,x})
\, ds
\;\Bigg|\; \mathcal F_t
\Bigg],
\end{equation}
where under $\mathbb Q$ the forward diffusion SDE  solves
\[
dX_s
=
-(b_0+\sigma\gamma)\,ds
+
\sigma\, dW_s^{\mathbb Q},
\qquad X_t=x,
\]
with $dW_s^{\mathbb Q}=dW_s+\gamma ds.$
In particular, the solution of the forward can be written explicitly as
\[
X_T=x-(b_0+\sigma\gamma)(T-t)+
\sigma\big(W_T^{\mathbb Q}-W_t^{\mathbb Q}\big).
\]

Moreover, the associated martingale term satisfies
\begin{equation}
q(t,x)=\sigma\,u_x(t,x).    
\end{equation}
\end{theorem}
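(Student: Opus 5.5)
The plan is a Feynman--Kac/Girsanov verification argument, and then an appeal to the abstract well-posedness result for uniqueness.

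\textbf{Uniqueness.} I would start here, since it is the cheapest step. Equation \eqref{BSPDE-main-f} is a special case of \eqref{eq:bspde-general} with the following data:
\[
\mathcal L_{\rho,a,b}=\kappa\,\partial_{xx}-b_0\,\partial_x,\qquad b(t,p,q)=c\,p+\gamma\,q+f(t,\cdot),\qquad \phi=g.
\]
By Remark~\ref{Remark_adjoint}, $\mathcal L_{\rho,a,b}=\mathcal A^*_{\rho,a,b}$ in the constant-coefficient case. Its coercivity follows exactly as in Lemma~\ref{lem:coercivity}, with $\lambda=\min(\rho_0,a_0)$. The drift $b$ is Lipschitz in $(p,q)$ with constant $|c|+|\gamma|$. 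Theorem~\ref{thm:oks-general} then gives existence and uniqueness of a variational solution $(u,q)$. It therefore suffices to exhibit one pair $(u,q)$, with $u$ given by \eqref{representationsmoothcase}, that solves the equation classically.

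\textbf{Regularity of the candidate.} Let $u$ be defined by \eqref{representationsmoothcase}. Under $\mathbb Q$, the increment $W^{\mathbb Q}_T-W^{\mathbb Q}_t$ is independent of $\mathcal F_t$, and $X^{t,x}_s$ is an explicit Gaussian, namely
\[
X^{t,x}_s = x-(b_0+\sigma\gamma)(s-t)+\sigma\big(W^{\mathbb Q}_s-W^{\mathbb Q}_t\big).
\]
Hence $u(t,x)$ is a convolution of the bounded functions $g$ and $f(s,\cdot)$ with a Gaussian kernel of variance $\sigma^2(T-t)$, respectively $\sigma^2(s-t)$, times exponential weights. Since $\sigma^2=\rho_0a_0=2\kappa$, differentiating under the integral shows that $u$ is $C^{1,2}$ on $[0,T)\times\mathbb R$. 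It also satisfies the Kolmogorov backward equation
\[
\partial_t u+\kappa u_{xx}-(b_0+\sigma\gamma)u_x+cu+f=0,\qquad u(T,\cdot)=g,
\]
with the usual singularity-free control of derivatives near $t=T$ because $g$ is bounded.

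\textbf{Identifying $q$.} Next I would apply It\^o's formula under $\mathbb Q$ to
\[
M_s:=e^{c(s-t)}u(s,X_s)+\int_t^s e^{c(r-t)}f(r,X_r)\,dr .
\]
The PDE kills the $ds$-terms, so $M$ is a $\mathbb Q$-martingale with
\[
dM_s=e^{c(s-t)}\sigma u_x(s,X_s)\,dW^{\mathbb Q}_s .
\]
This is the martingale-representation content, and it is where the identification $q=\sigma u_x$ comes from: along characteristics the stochastic integrand of $u$ is $\sigma u_x$. Rewriting $dW^{\mathbb Q}=dW+\gamma\,ds$ converts the extra term $\sigma\gamma u_x\,ds$ into $\gamma q\,ds$. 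The PDE drift $-(b_0+\sigma\gamma)u_x$ then becomes $-b_0u_x+\gamma q$, which is exactly the drift in \eqref{BSPDE-main-f}. Combining, $(u,\sigma u_x)$ satisfies \eqref{BSPDE-main-f}, and uniqueness closes the argument.

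\textbf{Main obstacle.} I expect the hardest step to be the consistency of this last identification. With deterministic $f$ and $g$, the function $u$ in \eqref{representationsmoothcase} is deterministic. The genuine $dW_t$-component of $u(t,x)$ at fixed $x$ is then zero, while $\sigma u_x$ is the integrand only along the moving point $X_s$. I would first try to make the statement precise by reading the equation along the flow $x\mapsto X^{t,x}_s$, which is the standard stochastic-characteristics interpretation. If that fails, the fallback is to verify directly that the pair $(u,q)$ with the $\gamma q$ term absorbed into the first-order coefficient satisfies the variational formulation. In either case the check would be done carefully before invoking uniqueness.
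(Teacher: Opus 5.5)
\textbf{The verification step fails, and the statement itself is false for deterministic data.} The step that breaks is the one you flag yourself, and no reinterpretation rescues it. Take deterministic $f,g$, which is the setting your Gaussian-convolution argument assumes. Then $(u,\sigma u_x)$ does not solve \eqref{BSPDE-main-f}, and the claimed conclusion is false.

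To see this, let $\bar u$ solve $\bar u_t+\kappa\bar u_{xx}-b_0\bar u_x+c\bar u+f=0$ with $\bar u(T)=g$. This is Feynman--Kac under $\mathbb P$ with $dX_s=-b_0\,ds+\sigma\,dW_s$, with no change of measure. Then $(\bar u,0)$ is an adapted solution of \eqref{BSPDE-main-f}. The very uniqueness you take from Theorem~\ref{thm:oks-general} therefore forces $q\equiv0$ and $u=\bar u$.

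Your candidate \eqref{representationsmoothcase} is deterministic and solves the PDE with first-order coefficient $-(b_0+\sigma\gamma)$. So it equals $\bar u$ only if $\gamma u_x\equiv0$, and $q=\sigma u_x$ holds only if $u_x\equiv0$. With $f\equiv0$ and $g(x)=e^{-x^2}$, both fail.

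Reading the equation along characteristics does not help. For a genuine solution $(u,q)$, It\^o--Ventzel under $\mathbb P$ gives
\[
d\,u(s,X_s)=\big[-cu-\gamma q-f+\sigma q_x\big](s,X_s)\,ds+\big(q+\sigma u_x\big)(s,X_s)\,dW_s .
\]
Your martingale $M$ has integrand $\sigma u_x(s,X_s)$, so matching integrands identifies $q(s,X_s)=0$, not $q=\sigma u_x$. You are conflating the $Z$-component of the BSDE along $s\mapsto X_s$ with the martingale density $q(t,x)$ of the BSPDE at fixed $x$.

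The variational fallback fails for the same reason: a deterministic $\mathbb H$-valued process has zero $dW$-part.

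There is also a sign slip. With $q=\sigma u_x$, $-(b_0+\sigma\gamma)u_x=-b_0u_x-\gamma q$, so even formally the driver you produce contains $-\gamma q$, not $+\gamma q$. A minor point as well: Theorem~\ref{thm:oks-general} needs $g\in\mathbb H$ and $f\in L^2(0,T;\mathbb H)$, which boundedness alone does not give.

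\textbf{The paper's proof does not offer a repair.} It reaches the same formula through the same conflation, with three further defects:
\begin{enumerate}
\item Its It\^o--Ventzel step omits the cross-variation term $\sigma q_x(s,X_s)\,ds$.
\item With $dW^{\mathbb Q}=dW+\gamma\,ds$, the term $\gamma Z_s$ is doubled rather than cancelled, and the leftover $\gamma\sigma u_x$ is dropped.
\item The closing identification $D^{\mathbb Q}_tu(t,x)=\sigma u_x(t,x)$ is false for a deterministic $u$, whose Malliavin derivative vanishes.
\end{enumerate}

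\textbf{What your computation does establish.} Once the drift is $-b_0$ and $\mathbb Q=\mathbb P$, you get the standard Feynman--Kac link: $Y_s=\bar u(s,X_s)$ and $Z_s=\sigma\bar u_x(s,X_s)$ solve the BSDE $-dY_s=\big(cY_s+f(s,X_s)\big)ds-Z_s\,dW_s$. The BSPDE itself has $q\equiv0$.

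A statement with genuine $\gamma$-dependence needs random terminal data $g(\omega,x)$. In that case, the $\gamma q$ term is removed by the Girsanov change that makes $W_t-\gamma t$ a Brownian motion. The spatial spreading must then come from the semigroup of $\kappa\partial_{xx}-b_0\partial_x$ (equivalently, a Brownian motion independent of $W$), not from $X$ driven by $W^{\mathbb Q}$ itself.
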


\begin{proof}
Define the forward diffusion under $\mathbb P$:
\[
dX_s = -b_0\, ds + \sigma\, dW_s,
\qquad X_t=x.
\]
Set
\[
Y_s=u(s,X_s),
\qquad
Z_s=\sigma u_x(s,X_s)+q(s,X_s).
\]
Applying It\^o--Ventzel yields
\[
dY_s
=
-
\Big[
cY_s
+
\gamma q(s,X_s)
+
f(s,X_s)
\Big]ds
+
Z_s dW_s.
\]
Substituting $q=Z-\sigma u_x$ gives
\[
dY_s
=
-
\Big[
cY_s
+
\gamma Z_s
-
\gamma\sigma u_x(s,X_s)
+
f(s,X_s)
\Big]ds
+
Z_s dW_s.
\]
Introduce the probability measure $\mathbb Q$ via
\[
\frac{d\mathbb Q}{d\mathbb P}
=
\exp\!\Big(
-\gamma W_T
-\tfrac12\gamma^2 T
\Big),
\]
so that
$
dW_s^{\mathbb Q}=dW_s+\gamma ds.
$
Under $\mathbb Q$, the $\gamma Z_s$ term cancels and we obtain
\[
dY_s
=
-
\Big[
cY_s
+
f(s,X_s)
\Big]ds
+
Z_s dW_s^{\mathbb Q}.
\]
Using the integrating factor $e^{\int c}$ and taking conditional expectation under $\mathbb Q$ yields
\[
u(t,x)
=
\mathbb E^{\mathbb Q}
\Bigg[
e^{c(T-t)} g(X_T)
+
\int_t^T e^{c(s-t)} f(s,X_s)\, ds
\;\Bigg|\; \mathcal F_t
\Bigg].
\]
This establishes the first part of the proof.

On the one hand, from the definition of $Z$, we have
\[
Z_t
=
\sigma u_x(t,X_t)
+
q(t,X_t).
\]
On the other hand, since $Y_t=u(t,X_t)$, the martingale representation (Clark--Ocone formula under $\mathbb Q$) gives
\[
Z_t=D_t^{\mathbb Q}Y_t.
\]
Using the chain rule of Malliavin calculus,
\[
D_t^{\mathbb Q}Y_t
=
D_t^{\mathbb Q}u(t,X_t)
=
\sigma u_x(t,X_t)
+
D_t^{\mathbb Q}u(t,X_t).
\]
Therefore, we get
\[
Z_t
=
\sigma u_x(t,X_t)
+
D_t^{\mathbb Q}u(t,X_t).
\]
Comparing with the definition of $Z_t$ yields
\[
q(t,X_t)=D_t^{\mathbb Q}u(t,X_t).
\]
Since this holds for all $x$, we conclude
\[
q(t,x)=D_t^{\mathbb Q}u(t,x).
\]
In the present constant coefficient Markov setting,
\[
D_t^{\mathbb Q}u(t,x)=\sigma u_x(t,x),
\]
and hence
$
q(t,x)=\sigma u_x(t,x),
$
which completes the proof.
\end{proof}

We now consider a more specific case where $f \equiv 0$. This corresponds to a classical subclass of BSPDEs arising in stochastic control (see, e.g., \cite{Oksendal}). In this setting, the structure of the equation allows for a more explicit computation, which will be presented below.

\begin{corollary}[Particular case]\label{corBSPDE}

If $f\equiv 0$, then  the random variable $X_T$ is Gaussian and satisfies
\[
X_T \sim \mathcal N\!\left(
x-(b_0+\sigma\gamma)(T-t),
\;\sigma^2 (T-t)
\right).
\]
Consequently, the solution admits the explicit representation
\[
u(t,x)
=
e^{c(T-t)}
\int_{\mathbb R}
\frac{g(y)}{\sqrt{2\pi\sigma^2 (T-t)}}
\exp\!\left(
-
\frac{\big(y-x+(b_0+\sigma\gamma)(T-t)\big)^2}
{2\sigma^2 (T-t)}
\right)
dy.
\]
Moreover, the associated process $q$ is given by
\[
q(t,x)
=
e^{c\tau}
\int_{\mathbb R}
g(y)
\frac{y-m}{\sigma\tau}
\frac{1}{\sqrt{2\pi\sigma^2\tau}}
\exp\!\left(
-\frac{(y-m)^2}{2\sigma^2\tau}
\right)
dy.
\]
\end{corollary}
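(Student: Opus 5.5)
The plan is to start from the representation \eqref{representationsmoothcase} of Theorem~\ref{thm:explicitBSPDE} with $f\equiv 0$. In this case the source integral disappears and $u(t,x)=e^{c(T-t)}\,\mathbb E^{\mathbb Q}[g(X_T^{t,x})\mid\mathcal F_t]$.

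\textbf{Law of $X_T$.} Under $\mathbb Q$ the forward equation has constant drift $-(b_0+\sigma\gamma)$ and constant volatility $\sigma$. Hence
\[
X_T=x-(b_0+\sigma\gamma)(T-t)+\sigma\big(W^{\mathbb Q}_T-W^{\mathbb Q}_t\big),
\]
which is an affine function of the Brownian increment. By Girsanov's theorem (already used in the proof of Theorem~\ref{thm:explicitBSPDE}), $W^{\mathbb Q}$ is a $\mathbb Q$-Brownian motion. So $W^{\mathbb Q}_T-W^{\mathbb Q}_t\sim\mathcal N(0,T-t)$ and it is independent of $\mathcal F_t$. This gives the stated Gaussian law with mean $m:=x-(b_0+\sigma\gamma)(T-t)$ and variance $\sigma^2\tau$, where $\tau:=T-t$.

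\textbf{Formula for $u$.} Because $x$ is deterministic and the increment is independent of $\mathcal F_t$, the conditional expectation is an ordinary expectation against the Gaussian density. Writing it out gives the stated integral formula for $u(t,x)$. The boundedness and measurability of $g$ make the integral finite and allow differentiation under the integral sign for $\tau>0$.

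\textbf{Formula for $q$.} I would use the identity $q=\sigma u_x$ from Theorem~\ref{thm:explicitBSPDE}. Only the exponent depends on $x$, through $m$, and $\partial m/\partial x=1$. Therefore
\[
\partial_x\exp\!\Big(-\tfrac{(y-m)^2}{2\sigma^2\tau}\Big)=\tfrac{y-m}{\sigma^2\tau}\exp\!\Big(-\tfrac{(y-m)^2}{2\sigma^2\tau}\Big).
\]
Multiplying by $\sigma$ produces the factor $\tfrac{y-m}{\sigma\tau}$, which is exactly the claimed expression with $\tau=T-t$.

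\textbf{Main obstacle.} The only delicate point is justifying differentiation under the integral sign. I would dominate the derivative uniformly for $x$ in a compact set using $\|g\|_\infty\,|y-m|\,e^{-(y-m)^2/(2\sigma^2\tau)}$, which is integrable in $y$. Dominated convergence then applies. A secondary check is that the statement implicitly uses the notation $m$ and $\tau$; I would define both explicitly at the start of the proof.
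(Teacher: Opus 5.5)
As a deduction from Theorem~\ref{thm:explicitBSPDE}, your argument is correct and is essentially the paper's proof: you specialize \eqref{representationsmoothcase} to $f\equiv 0$, use the Gaussian $\mathbb Q$-law of $X_T$ (mean $m=x-(b_0+\sigma\gamma)\tau$, variance $\sigma^2\tau$) to write $u$ as a Gaussian integral, and get $q$ from $q=\sigma u_x$ by differentiating through $m$, and your independence and domination remarks supply details the paper leaves implicit. Be aware, though, that you inherit a defect of that theorem: since $g$ is deterministic, both $u$ and the claimed $q=\sigma u_x$ are deterministic, so integrating \eqref{BSPDE-main-f} from $t$ to $T$ would make the centered Gaussian variable $\int_t^T \sigma u_x(s,x)\,dW_s$ deterministic, forcing $u_x\equiv 0$, whereas the pair $(\bar u,0)$ with $\bar u_t+\kappa\bar u_{xx}-b_0\bar u_x+c\bar u=0$, $\bar u(T,\cdot)=g$, genuinely solves \eqref{BSPDE-main-f}; so for nonconstant $g$ one actually has $q\equiv 0$, and the Gaussian mean should be $x-b_0(T-t)$.
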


\begin{proof}
Assume $f\equiv 0$, Let $\tau:=T-t$ and define
\[
m:=x-(b_0+\sigma\gamma)\tau.
\]
Then the solution can be written as
\[
u(t,x)
=
e^{c\tau}
\int_{\mathbb R}
g(y)
\frac{1}{\sqrt{2\pi\sigma^2\tau}}
\exp\!\left(
-\frac{(y-m)^2}{2\sigma^2\tau}
\right)
dy.
\]
Since $\partial_x m=1$, differentiation under the integral sign yields
\[
\partial_x
\exp\!\left(
-\frac{(y-m)^2}{2\sigma^2\tau}
\right)
=
\frac{y-m}{\sigma^2\tau}
\exp\!\left(
-\frac{(y-m)^2}{2\sigma^2\tau}
\right).
\]
Hence, we obtain
\[
u_x(t,x)
=
e^{c\tau}
\int_{\mathbb R}
g(y)
\frac{y-m}{\sigma^2\tau}
\frac{1}{\sqrt{2\pi\sigma^2\tau}}
\exp\!\left(
-\frac{(y-m)^2}{2\sigma^2\tau}
\right)
dy.
\]
Since we have proved that
$
q(t,x)=\sigma u_x(t,x),
$
we obtain, therefore, 
\[
q(t,x)
=
e^{c\tau}
\int_{\mathbb R}
g(y)
\frac{y-m}{\sigma\tau}
\frac{1}{\sqrt{2\pi\sigma^2\tau}}
\exp\!\left(
-\frac{(y-m)^2}{2\sigma^2\tau}
\right)
dy.
\]

Let $Z\sim\mathcal N(0,1)$ and note that
$
X_T = m + \sigma\sqrt{\tau}\,Z.$ It implies, then
\[
u(t,x)=e^{c\tau}\mathbb E[g(X_T)].
\]
Differentiating under the expectation gives
\[
u_x(t,x)
=
e^{c\tau}
\mathbb E\!\left[
g(X_T)\frac{Z}{\sigma\sqrt{\tau}}
\right],
\]
and therefore
$
q(t,x)=
e^{c\tau}
\mathbb E\!\left[
g(X_T)\frac{Z}{\sqrt{\tau}}
\right].
$
Equivalently, it implies
\[
q(t,x)
=
\frac{e^{c\tau}}{\sigma\tau}
\mathbb E\!\left[
g(X_T)(X_T-m)
\right].
\]
Hence, the proof is achieved. 
\end{proof}

\subsubsection{Non smooth Operator}
We now present an example illustrating the general results in the case where the coefficients $(\rho,a,b)$ are piecewise constant functions of the form \eqref{coeff-piecewise}.

\medskip 
Define
\[
\kappa(x)=\frac{\rho(x)a(x)}{2},
\qquad
\sigma(x)=\sqrt{\rho(x)a(x)}.
\]
And, let $
\di \kappa^\pm=\frac{\rho^\pm a^\pm}{2}
\,\text{and}\,
\sigma^\pm=\sqrt{\rho^\pm a^\pm},
$ are two positives constants.

\begin{Example}\label{EX_BSPDE-piecewise}
   Consider the BSPDE
\begin{equation}\label{BSPDE-piecewise}
\begin{cases}
du(t,x)
=
-\Big[
\kappa(x) u_{xx}(t,x)
-
b(x) u_x(t,x)
+
c\,u(t,x)
+
\gamma\, q(t,x)
+
f(t,x)
\Big]dt
+
q(t,x)\, dW_t,\\
u(T,x)=g(x),
\end{cases}
\end{equation} 
for any $(t,x)\in [0,T]\times \R.$  Here, $f,g$ are given bounded measurable functions.
\end{Example}
\begin{theorem}[Representation for piecewise constant coefficients]\label{thm:piecewise-BSPDE}
Consider the BSPDE in Example \eqref{EX_BSPDE-piecewise}. Let $X^{t,x}$ be the solution of the forward SDE
\begin{equation}\label{forward-piecewise}
dX_s
=
-
\big(b(X_s)+\sigma(X_s)\gamma\big)\,ds
+
\sigma(X_s)\, dW_s^{\mathbb Q},
\qquad X_t=x,
\end{equation}
where $W^{\mathbb Q}$ is defined by
$
dW_s^{\mathbb Q}=dW_s+\gamma\,ds.
$
Then the following properties hold:
\begin{enumerate}
\item[(i)] For $x\neq 0$, on each half-line, the function $u$ is a classical solution of
\[
u_t+\kappa^\pm u_{xx}
-(b^\pm+\sigma^\pm\gamma)u_x+cu+f=0
\quad \text{in }(0,T)\times\mathbb R_\pm.
\]

\item[(ii)] The solution satisfies the transmission conditions at $x=0$:
\[
u(t,0^-)=u(t,0^+),
\qquad
\kappa^-u_x(t,0^-)=\kappa^+u_x(t,0^+).
\]

\item[(iii)] On each half-line, the martingale component is given by
\[
q(t,x)=\sigma^\pm u_x(t,x),
\qquad x\in \mathbb R_\pm.
\]
\end{enumerate}
Moreover, $u$ is uniquely determined by the representation \eqref{representationsmoothcase} within this class of functions.
\end{theorem}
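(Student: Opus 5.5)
The plan is to mimic the proof of Theorem~\ref{thm:explicitBSPDE}, replacing the constant-coefficient diffusion by the skew-type diffusion generated by the piecewise operator. I will define $u$ by the representation formula \eqref{representationsmoothcase} with $X^{t,x}$ the solution of \eqref{forward-piecewise}, and then check that this $u$ has properties (i)--(iii).

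\textbf{Step 0: the forward SDE.} Since $\sigma(x)\ge \sqrt{\lambda^2}>0$ is bounded and the drift is bounded and measurable, \eqref{forward-piecewise} has a unique strong solution in dimension one (Zvonkin/Le Gall type results). Its generator is
\[
\kappa(x)\partial_{xx}-(b(x)+\sigma(x)\gamma)\partial_x .
\]
As written, this operator in nondivergence form does not by itself produce the flux condition $\kappa^-u_x(0^-)=\kappa^+u_x(0^+)$. I therefore interpret the process as the one associated with the divergence-form operator through the weight $\rho$, that is, the skew diffusion of \cite{lejay1,lejay2}. Its domain consists exactly of functions that are $C^2$ off $0$, continuous at $0$, and satisfy the transmission condition in (ii). I will invoke this characterization from \cite{lejay1} rather than reprove it.

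\textbf{Step 1: items (i) and (ii).} Set $\tau=T-t$. By the Feynman--Kac formula for this Markov process, $u$ solves the backward Kolmogorov equation with potential $c$ and source $f$. On each open half-line the coefficients are constant, so interior parabolic regularity, together with the local Gaussian kernel bounds of \eqref{eq:aronson}, makes $u$ classical there. This gives (i), with the constants $\kappa^\pm$, $b^\pm$, $\sigma^\pm$.

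Continuity of $u$ across $0$ follows from the Aronson bound, which yields Hölder continuity of the kernel. The flux condition is inherited from the domain of the generator: apply the Itô--Tanaka (symmetric local time) formula to $u(s,X_s)$. The local-time term at $0$ carries the coefficient $\kappa^+u_x(0^+)-\kappa^-u_x(0^-)$, with the weights fixed by the skewness parameter. For the representation to hold this term must vanish, which is exactly (ii).

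\textbf{Step 2: item (iii).} Here I follow the computation in Theorem~\ref{thm:explicitBSPDE}. With $Y_s=u(s,X_s)$, the local-time term vanishes by (ii), and the Itô--Tanaka formula gives martingale density $\sigma(X_s)u_x(s,X_s)$. This density is well defined a.e., since $X$ spends zero Lebesgue time at $0$. The Girsanov change to $\mathbb Q$ cancels the $\gamma$ drift exactly as before, and identifying the two martingale representations yields $q=\sigma^\pm u_x$ on $\mathbb R_\pm$.

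\textbf{Step 3: uniqueness.} Suppose $v$ is another solution in the class described by (i)--(iii). Apply the Itô--Tanaka formula to $e^{c(s-t)}v(s,X_s)$. Conditions (i)--(iii) kill the drift and local-time terms, and taking $\mathbb Q$-conditional expectations gives $v=u$.

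The main obstacle is Step 1(ii): the transmission condition must be matched to the correct skew diffusion. The weights depend on whether one uses $\kappa$, $a$, or $\rho a$, and the coefficient $C_\pm^{\rho,a}$ in Lemma~\ref{lem:adjoint-piecewise} has to be reconciled with the stated flux $\kappa^\pm u_x$. I will first try to fix the skewness parameter $\beta=(\kappa^+-\kappa^-)/(\kappa^++\kappa^-)$ so that the symmetric local-time term is proportional to $\kappa^+u_x(0^+)-\kappa^-u_x(0^-)$. I will then check that this choice is consistent with the divergence form of $\mathcal A_{\rho,a,b}$.
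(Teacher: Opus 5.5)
The step that fails is Step~2, and it cannot be repaired: item (iii) is incompatible with the rest of the statement.

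The function you build from \eqref{representationsmoothcase} is deterministic. Indeed, $f$ and $g$ are nonrandom, and $X^{t,x}$ is a functional of the increments $W^{\mathbb Q}_s-W^{\mathbb Q}_t$, $s\ge t$, which are $\mathbb Q$-independent of $\mathcal F_t$. So the conditional expectation is just a number, and your Step~1 even makes $u$ a classical, hence $C^1$-in-$t$, solution. If such a $u$ solves \eqref{BSPDE-piecewise}, then at each fixed $x$ the quadratic variation $\int_0^t q(s,x)^2\,ds$ of $t\mapsto u(t,x)$ vanishes. Hence $q(\cdot,x)\equiv 0$, and (iii) would force $u_x\equiv 0$, which is false for any nonconstant $g$.

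What It\^o--Tanaka gives you, $\sigma(X_s)u_x(s,X_s)$, is the integrand $Z_s$ of $s\mapsto u(s,X_s)$ \emph{along the trajectory}, i.e.\ the $Z$ of the associated forward--backward system. It is not the integrand $q(s,x)$ of $t\mapsto u(t,x)$ \emph{at fixed} $x$. For a genuinely random field you would need It\^o--Wentzell, which gives $Z_s=q(s,X_s)+\sigma(X_s)u_x(s,X_s)$ plus an extra drift $\sigma q_x$. Matching this with $Z=\sigma u_x$ yields $q=0$, not $q=\sigma u_x$.

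For the same reason your Girsanov step has nothing to act on. With $q=0$ the term $\gamma q$ drops out and \eqref{BSPDE-piecewise} becomes $u_t+\kappa u_{xx}-bu_x+cu+f=0$. This contradicts the $\gamma$-shifted equation in (i) unless $\gamma u_x\equiv 0$. Even formally, inserting $q=\sigma u_x$ into the $dt$-part gives the first-order coefficient $-(b-\sigma\gamma)$, with the opposite sign to (i).

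The paper obtains (iii) by applying Theorem~\ref{thm:explicitBSPDE} on each half-line. That theorem's Malliavin step asserts $D^{\mathbb Q}_tu(t,x)=\sigma u_x(t,x)$, which makes the same confusion between the fixed-$x$ and along-the-path integrands. On this point you follow the paper and inherit its defect. Your Step~3 uses (iii) and falls with it.

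On (ii), your route genuinely differs from the paper's, which integrates the equation over $(-\varepsilon,\varepsilon)$, and your diagnosis is the correct one. For the It\^o SDE \eqref{forward-piecewise} exactly as written, the drift is bounded and $\sigma$ is bounded below. So the scale function is $C^1$, and for $t<T$ the function $u(t,\cdot)$ has a continuous derivative at $0$; a jump of $u_x$ would leave an uncompensated local-time term. Thus $\kappa^-u_x(t,0^-)=\kappa^+u_x(t,0^+)$ fails for that $X$ unless $\kappa^-=\kappa^+$ or $u_x(t,0)=0$. The paper's integration across $0$, done correctly for the nondivergence term $\kappa u_{xx}$ (divide by $\kappa$ first), likewise yields continuity of $u_x$ rather than of the flux.

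Switching to a skew diffusion, however, does not prove the stated theorem. It changes $u$, since the last sentence of the statement ties $u$ to \eqref{representationsmoothcase} with the given $X$. Moreover, your two descriptions of the skew process do not agree. The process attached to the divergence-form operator $\frac{\rho}{2}\frac{d}{dx}\big(a\frac{d}{dx}\big)$ has interface condition $a^-u_x(0^-)=a^+u_x(0^+)$; this is what keeps $(au_x)'$ free of a Dirac mass. That condition matches the $\kappa^\pm=\rho^\pm a^\pm/2$ flux only when $\rho^-=\rho^+$. Choosing $\beta=(\kappa^+-\kappa^-)/(\kappa^++\kappa^-)$ instead makes (ii) true by construction rather than as a consequence of the model.

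Finally, even with the skew process, the It\^o--Tanaka argument requires $u(t,\cdot)$ to be $C^1$ up to $0$ from each side. Interior regularity and the Aronson bound do not provide this.
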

 
\begin{proof}Under $\mathbb Q$, the forward SDE reads
\[
dX_s
=
\mu(X_s)\,ds+\sigma(X_s)\,dW_s^{\mathbb Q},
\qquad X_t=x,
\]
where $
\mu(x):=-(b(x)+\sigma(x)\gamma).$
Since $b(\cdot)$ and $\sigma(\cdot)$ are piecewise constant, they are bounded and measurable. Moreover,
\[
\sigma^2(x)\ge \min\{\rho^-a^-,\rho^+a^+\}>0,
\qquad x\in\mathbb R,
\]
so the diffusion coefficient is uniformly elliptic. In particular, the one-dimensional SDE above admits a unique strong solution.

Concerning the Markov property. Let us fix $0\le t\le s\le T$ and define, for $r\ge0$,
\[
\widetilde W_r:=W_{t+r}^{\mathbb Q}-W_t^{\mathbb Q}.
\]
Then $(\widetilde W_r)_{r\ge0}$ is a Brownian motion independent of $\mathcal F_t$. Writing
\[
Y_r:=X_{t+r},\qquad r\ge0,
\]
so that $Y$ satisfies
\[
Y_r
=
X_t+\int_0^r \mu(Y_\ell)\,d\ell
+\int_0^r \sigma(Y_\ell)\,d\widetilde W_\ell.
\]
Hence, by pathwise uniqueness, $Y$ coincides with the unique strong solution of the same SDE started from $X_t$ at time $0$ and driven by the shifted Brownian motion $\widetilde W$.

Therefore, for every bounded Borel function $\varphi$,
\[
\E^{\Q}\!\left[\varphi(X_s)\mid\mathcal F_t\right]
=
\E^{\Q}\!\left[\varphi(Y_{s-t})\mid\mathcal F_t\right]
=
\P_{s-t}\varphi(X_t),
\]
where
\[
\P_r\varphi(x):=\E^{\Q}\!\left[\varphi(X_r^{\,x})\right],
\qquad r\ge0.
\]
Thus $X$ is a Markov process. Since the same argument applies at stopping times, $X$ is in fact a strong Markov process.

Consequently, the martingale problem associated with the generator
\[
\mathcal L u
=
\kappa(x) u_{xx}
-
(b(x)+\sigma(x)\gamma) u_x,\qquad x\neq 0
\]
is well posed. 

Fix $x\neq 0.$ On each half-line $\mathbb R_- = (-\infty,0)$ and $\mathbb R_+ = (0,\infty)$, the coefficients are constant, so that, from the smooth case,  Theorem \ref{thm:explicitBSPDE}  applies locally and yields the representation given in \eqref{representationsmoothcase}.
%\[
%u(t,x)=\mathbb E^{\mathbb Q}
%\Bigg[e^{c(T-t)} g(X_T)+
%\int_t^T e^{c(s-t)} f(s,X_s)\,ds
%\;\Bigg|\;\mathcal F_t
%\Bigg]\quad \text{for}\; x\neq0. \]

When $x=0,$ the interface behavior is determined by integrating the deterministic parabolic equation
\[
u_t
+
\kappa(x) u_{xx}
-
(b(x)+\sigma(x)\gamma) u_x
+
c u
+
f
=
0
\]
over a small interval $(-\varepsilon,\varepsilon)$ and let $\varepsilon\downarrow 0$. Since $\kappa$ has a jump at $0$, the integration by parts yields the flux condition
\[
\kappa^- u_x(t,0^-)=\kappa^+ u_x(t,0^+).
\]
Continuity of $u$ follows from parabolic regularity and the probabilistic representation, so that
\[
u(t,0^-)=u(t,0^+).
\]
Thus $u$ satisfies the standard transmission conditions.

Finally, the identification of the martingale term follows directly from Theorem \ref{thm:explicitBSPDE}. Indeed, on each half-line $\mathbb R_\pm$, the coefficients are constant, so the smooth-case result applies locally and yields
\[
q(t,x)=\sigma^\pm u_x(t,x),
\qquad x\in\mathbb R_\pm
\]
which proves $(iii)$.

Combining the above steps completes the proof.
\end{proof}

\section{Examples of Applications: Heat Regulation}
In this section, we illustrate the applicability of the theoretical results established in the previous sections through some concrete examples arising in heat regulation.
\subsection{Smooth Case: Nonzero-Sum Stochastic  with Constant Coefficients}
Heat regulation in conductive materials plays a central role in many
industrial applications, including thermal energy storage systems,
chemical reactors, electronic cooling, and advanced manufacturing processes.
Maintaining a stable and spatially uniform temperature distribution is
essential to prevent thermal stresses, structural damage, or loss of
performance.

\medskip 

The system consists of a homogeneous material occupying the one-dimensional
spatial domain $\mathbb R$. The temperature distribution is denoted by
$Y(t,x)$, where $t\ge 0$ is time and $x\in\mathbb R$ is the spatial variable.
The material is characterized by constant physical parameters $(\rho_0,a_0,b_0)$:
\begin{itemize}
\item density $\rho_0>0$,
\item thermal conductivity $a_0>0$,
\item transport  coefficient $b_0\in\mathbb R$.
\end{itemize}

Here, $\rho_0, a_0 > 0$ and $b_0 \in \mathbb{R}$ are given constants.

Two independent actuators are used to regulate the temperature.
Their spatial influence is described by functions
$\alpha_1,\alpha_2\in L^2(\mathbb R)$, while the scalar control processes
$u_1(t)$ and $u_2(t)$ represent the time dependent heating intensities.

%%%%%%%%%%%%%%Graph%%%%%%%%%%%%%%%%%%%%%%%%%%%%%%%%%
\begin{figure}[ht!]
\centering
\begin{tikzpicture}[thick,scale=2]

% ======= Main block (homogeneous medium) =======
\coordinate (A1) at (0,0);
\coordinate (A2) at (0,1);
\coordinate (A3) at (2.2,1);
\coordinate (A4) at (2.2,0);

% Back/top shift for 3D effect
\coordinate (B1) at (0.35,0.35);
\coordinate (B2) at (0.35,1.35);
\coordinate (B3) at (2.55,1.35);
\coordinate (B4) at (2.55,0.35);

% Front rectangle
\draw[very thick] (A1)--(A2)--(A3)--(A4)--cycle;

% 3D edges
\draw[dashed] (A1)--(B1);
\draw[very thick] (A2)--(B2);
\draw[very thick] (A3)--(B3);
\draw[very thick] (A4)--(B4);
\draw[dashed] (B1)--(B4);
\draw[very thick] (B2)--(B3);
\draw[very thick] (B3)--(B4);
\draw[dashed] (B1)--(B2);

% Fill the homogeneous medium (single phase)
\draw[fill=purple,opacity=0.45] (A1)--(A2)--(A3)--(A4)--cycle;
\draw[fill=purple,opacity=0.35] (A2)--(B2)--(B3)--(A3)--cycle;
\draw[fill=purple,opacity=0.35] (A4)--(B4)--(B3)--(A3)--cycle;

% ======= Two actuator regions (bands on the front face) =======
% Region 1 band
\coordinate (R11) at (0.25,0.15);
\coordinate (R12) at (0.25,0.85);
\coordinate (R13) at (0.95,0.85);
\coordinate (R14) at (0.95,0.15);

% Region 2 band
\coordinate (R21) at (1.25,0.15);
\coordinate (R22) at (1.25,0.85);
\coordinate (R23) at (1.95,0.85);
\coordinate (R24) at (1.95,0.15);

% ======= Axis =======
%\draw[ultra thick,->] (-0.25,-0.25) -- (2.65,-0.25)
%node[right] {$x:$ Spatial Domain};

% ======= Homogeneous parameter label =======
\node at (1.28,0.7) {\bf \small Constants coefficients};
\node at (1.1,0.49) {$(\rho_0,a_0,b_0)$};

% ======= Controls arrows (game) =======
\draw[cyan,ultra thick,<-] (1,1.3)-- (1.3,1.5) node[align=center] {\color{cyan} Controls Game $(u_1(t),u_2(t))$\\};
\draw[cyan,ultra thick,<-] (1.8,1.3)-- (1.3,1.5);

% ======= Noise / disturbance =======
\draw[gray,ultra thick,<-] (2.55,1.05)--(2.8,1.1)
node[right] {Noise};
\node[gray] at (3.15,0.95) {$\sigma_0\,dB(t)$};
\end{tikzpicture}
\caption{Homogeneous thermal medium with two distributed actuators}
\label{fig:homogeneous_heat}
\end{figure}
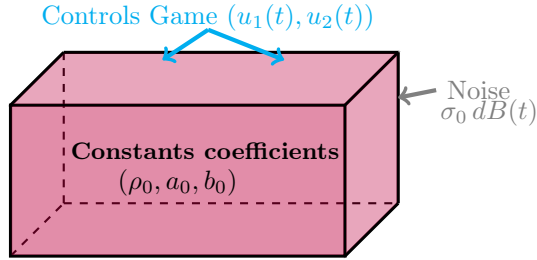

We consider the following stochastic evolution equation with constant coefficients:
\begin{equation}\label{forward-heat}
\left\{
\begin{aligned}
dY(t,x)
&=\Big[\kappa_0 Y_{xx}(t,x)
+b_0 Y_x(t,x)+\alpha_1(x)u_1(t)
+\alpha_2(x)u_2(t)
\Big]\,dt
+ \sigma_0\,dB(t),\\
Y(0,x)&=\xi(x),
\end{aligned}
\right.
\end{equation}
for $(t,x) \in [0,T] \times \mathbb{R}$, with
$
\kappa_0=\dfrac{\rho_0 a_0}{2}.
$

The second-order term represents diffusion, while the first-order term accounts for a constant drift. 
The noise term is driven by a standard Brownian motion $B(t)$, and $\sigma_0 > 0$ denotes the noise intensity.

\medskip 

The control enters additively through the space-dependent source terms 
$\alpha_i(x) u_i(t),i=1,2$, where $\alpha_i(x)$ are smooth weighting 
functions localizing the influence of each control.

\medskip 

Under the above assumptions, the stochastic evolution equation \eqref{forward-heat} admits a unique mild solution in $H=L^2(\mathbb R)$, according to Theorem~\ref{exitforward}.

\medskip 
Each player $i \in \{1,2\}$ seeks to minimize the quadratic cost functional
\begin{equation}\label{eq:focused_cost} J^i(u_1,u_2) = \mathbb{E} \left[\frac{\gamma_i}{2} \int_0^T  u^2_i(t)\,dt +  \frac{\gamma_3}{2}\int_{\mathbb{R}} Y^2(T,x)\,dx \right], \end{equation}
where $\gamma_i > 0,i=1,2,3$ penalizes the control effort. This defines a nonzero-sum stochastic  game.

\medskip 
Such a modeling framework is relevant to a broad class of thermal regulation
problems encountered in engineering and applied sciences, including:
\begin{itemize}
\item thermal energy storage and heat management systems,
\item materials with distributed or embedded heating mechanisms,
\item structural components operating under repeated thermal variations,
\item electronic and microelectronic devices subject to variable heat loads.
\end{itemize}

%Each actuator $u_i(t),i=1,2,$ seeks to regulate the temperature while minimizing its own
%energy expenditure. Since the control objectives may differ, the problem naturally leads to a nonzero-sum stochastic differential game formulation.

\medskip 

The Hamiltonian associated with player $i$ is defined by
\begin{equation}\label{hamiltonian}
\begin{aligned}
H^i(t,y,u_1,u_2,p^i,q^i)
&=
\langle p^i,\mathcal A y\rangle
+
u_1\langle p^i,\alpha_1\rangle
+
u_2\langle p^i,\alpha_2\rangle
+
\langle q^i,\sigma_0\rangle
+
\frac{\gamma_i}{2}u_i^2,
\end{aligned}
\end{equation}
where
\[
\mathcal A=\kappa_0\partial_{xx}+b_0\partial_x.
\]

According to Remark~\ref{Remark_adjoint}, the adjoint operator of $\mathcal A$ is given by
\[
\mathcal A^*=\kappa_0\partial_{xx}-b_0\partial_x.
\]

The adjoint processes $(p^i,q^i)$ satisfy the BSPDE
\begin{equation}\label{backward-heat}
\left\{
\begin{aligned}
-dp^i(t,x)
&=
\mathcal A^* p^i(t,x)
dt
-
q^i(t,x)\,dB(t),\\
p^i(T,x)&=Y(T,x).
\end{aligned}
\right.
\end{equation}

Observe that \eqref{backward-heat} is a linear BSPDE with constant coefficients. Therefore, it falls within the framework of Theorem~\ref{thm:explicitBSPDE}, and the representation formula \eqref{representationsmoothcase} applies the representation:
\begin{equation}
p^i(t,x)
=
\mathbb E
\left[
\phi\!\left(
x - b_0(T-t)
+
\sqrt{c}\,(B_T-B_t)
\right)
\Big|\mathcal F_t
\right].
\end{equation}

Equivalently,
\begin{equation}
p^i(t,x)
=
\int_{\mathbb R}
\phi(y)
\frac{1}{\sqrt{2\pi c (T-t)}}
\exp\!\left(
-\frac{(y-x+b_0(T-t))^2}{2c(T-t)}
\right)
dy.
\end{equation}

\medskip 

We now derive the optimal controls using the stochastic maximum principle.
For each player $i\in\{1,2\}$, the Hamiltonian $H^i$ is convex with respect to the control variable $u_i$. Therefore, a necessary and sufficient condition for optimality is given by the first-order condition
\[
\frac{\partial H^i}{\partial u_i}(t,Y,u_1,u_2,p^i,q^i)=0.
\]

From the expression of the Hamiltonian \eqref{hamiltonian}, we compute
\[
\frac{\partial H^i}{\partial u_i}
=
\langle p^i,\alpha_i\rangle
+
\gamma_i\,u_i.
\]

Setting this derivative equal to zero yields
$$
\langle p^i(t,\cdot),\alpha_i\rangle
+
\gamma_i\,u_i^*(t)
=
0,
$$
and hence the first-order optimality condition yields
\[
u_i^*(t)
=
-\frac{1}{\gamma_i}\,\langle p^i(t,\cdot),\alpha_i\rangle,
\qquad i=1,2.
\]

Substituting the explicit formula for $p^i$,
\begin{equation*}
\widehat u_i(t)
=
-\frac{1}{\gamma_i}
\int_{\mathbb R}
\phi(y)
\left[
\int_{\mathbb R}
\alpha_i(x)
\frac{1}{\sqrt{2\pi c (T-t)}}
\exp\!\left(
-\frac{(y-x+b_0(T-t))^2}{2c(T-t)}
\right)
dx
\right]
dy.
\end{equation*}

Thus, the optimal controls are given explicitly as Gaussian convolutions of
$\alpha_i$ and $\phi$.

Since each player minimizes its own cost functional and the optimality condition is satisfied simultaneously for $i=1,2$, the pair $(u_1^*,u_2^*)$ defines a Nash equilibrium.

\subsection{Non-Smooth Case: Nonzero-Sum Game with Piecewise Constant Coefficients}

We now extend the previous model to the case where the material exhibits
different physical properties on the two regions $\mathbb{R}_-$ and $\mathbb{R}_+$.
We assume that the material is characterized by piecewise constant coefficients
as in \eqref{coeff-piecewise}, where $a^\pm>0$, $\rho^\pm=1$, and $b^\pm\in\mathbb{R}$.

Define
\[
\kappa(x)=\frac{a(x)}{2},
\qquad
\sigma(x)=\sqrt{a(x)},
\qquad
b(x)=b^\pm \text{ on } \mathbb{R}_\pm.
\]

\medskip

The temperature field $Y(t,x)$ satisfies the stochastic heat equation
\begin{equation}\label{forward-piecewise-heat}
\left\{
\begin{aligned}
dY(t,x)
&=
\Big[
\kappa(x) Y_{xx}(t,x)
+
b(x) Y_x(t,x)
+
\alpha_1(x)u_1(t)
+
\alpha_2(x)u_2(t)
\Big]dt
+
\sigma(x)\,dB(t),\\
Y(0,x)&=\xi(x).
\end{aligned}
\right.
\end{equation}

Under the above assumptions, Theorem~\ref{exitforward} ensures the existence
and uniqueness of a mild solution.

\medskip

The Hamiltonian associated with player $i$ is defined by
\begin{equation}\label{hamiltonian2}
\begin{aligned}
H^i(t,y,u_1,u_2,p^i,q^i)
&=
\langle p^i,\mathcal A y\rangle
+
u_1\langle p^i,\alpha_1\rangle
+
u_2\langle p^i,\alpha_2\rangle
+
\langle q^i,\sigma(\cdot)\rangle
+
\frac{\gamma_i}{2}u_i^2,
\end{aligned}
\end{equation}
where
\[
\mathcal A
=
\kappa(x)\,\partial_{xx}
+
b(x)\,\partial_x.
\]

Each player $i \in \{1,2\}$ seeks to minimize the quadratic cost functional
\begin{equation}
J^i(u_1,u_2)
=
\mathbb{E}
\left[
\frac{\gamma_i}{2} \int_0^T u_i^2(t)\,dt
+
\frac{\gamma_3}{2}\int_{\mathbb{R}} Y^2(T,x)\,dx
\right],
\end{equation}
where $\gamma_i>0$, $i=1,2,3$.

\medskip

The adjoint processes $(p^i,q^i)$ satisfy the BSPDE
\begin{equation}\label{backward-piecewise-heat}
\left\{
\begin{aligned}
-dp^i(t,x)
&=
\mathcal A^* p^i(t,x)\,dt
-
q^i(t,x)\,dB(t),\\
p^i(T,x)&=Y(T,x),
\end{aligned}
\right.
\end{equation}
where $\mathcal A^*$ is given in Lemma~\ref{lem:adjoint-piecewise}.

\medskip

Equation \eqref{backward-piecewise-heat} is a BSPDE with piecewise constant
coefficients and therefore falls within the framework of
Theorem~\ref{thm:piecewise-BSPDE}. Applying the theorem with $u=p^i$, we obtain
\[
p^i(t,x)
=
\mathbb{E}^{\mathbb Q}
\Big[
e^{c(T-t)} Y\big(T,X_T^{t,x}\big)
\mid \mathcal F_t
\Big].
\]

Equivalently, the adjoint state admits the integral representation
\begin{equation}\label{piecewise-kernel-representation}
p^i(t,x)
=
e^{c(T-t)}
\int_{\mathbb R}
Y(T,y)\,
p(T-t,x,y)\,dy,
\end{equation}
where $p(\tau,x,y)$ denotes the transition density of the diffusion
\eqref{forward-piecewise}.

\medskip

The kernel $p(\tau,x,y)$ is the fundamental solution of the two-phase
parabolic operator
\[
\partial_t u
=
\kappa(x) u_{xx}
-
(b(x)+\sigma(x)\gamma)u_x,
\]
and satisfies the transmission conditions at $x=0$.

For $x,y$ lying in the same region, the kernel behaves locally like a Gaussian:
\[
p(\tau,x,y)
\approx
\frac{1}{\sqrt{2\pi \sigma_\pm^2 \tau}}
\exp\!\left(
-\frac{(y-x+\beta_\pm \tau)^2}{2\sigma_\pm^2 \tau}
\right),
\]
where
\[
\beta_\pm = b^\pm+\sigma^\pm\gamma.
\]

In general, $p(\tau,x,y)$ can be expressed as a superposition of Gaussian
terms corresponding to trajectories that may cross the interface $x=0$.

\medskip

Substituting \eqref{piecewise-kernel-representation} into the optimality condition
\[
\widehat u_i(t)
=
-\frac{1}{\gamma_i}
\int_{\mathbb R}\alpha_i(x)p^i(t,x)\,dx,
\]
and applying Fubini's theorem yields
\[
\widehat u_i(t)
=
-\frac{e^{c(T-t)}}{\gamma_i}
\int_{\mathbb R}
Y(T,y)
\left[
\int_{\mathbb R}
\alpha_i(x)\,p(T-t,x,y)\,dx
\right]
dy.
\]
The adjoint process satisfies the transmission conditions at the interface:
\[
p^i(t,0^-)=p^i(t,0^+),
\qquad
\kappa^-\, p_x^i(t,0^-)=\kappa^+\, p_x^i(t,0^+).
\]
As in the constant coefficient case, the optimal controls are given by
\[
u_i^*(t)
=
-\frac{1}{\gamma_i}\,\langle p^i(t,\cdot),\alpha_i\rangle,
\qquad i=1,2.
\]

\bigskip
\bigskip

{\bf Acknowledgments:} This work was supported by Supported by the Swedish Research Council grant (2020-04697) and the National Institute for Mathematical Sciences and Interactions (INSMI), CNRS, as part of the
PEPS JCJC 2025 call; and the project GAME (ANR-25-BFC 261359) of the French National Research Agency (ANR). 
%We thank the Editor, Associate Editor and anonymous reviewer for the valuable comments and remarks that have allowed to improve the quality of this work

\bibliography{Biblio}

\end{document}